\def\beq{\begin{equation}}
\def\eeq{\end{equation}}
\newtheorem{assumption}{Assumption}
\newtheorem{example}{Example}
\def\ba{\begin{array}}
\def\ea{\end{array}}
\def\beann{\begin{eqnarray*}}
\def\eeann{\end{eqnarray*}}
\def\bea{\begin{eqnarray}}
\def\eea{\end{eqnarray}}
\def\BT{\begin{theorem}}
\def\ET{\end{theorem}}
\def\BL{\begin{lemma}}
\def\EL{\end{lemma}}
\def\BC{\begin{corollary}}
\def\EC{\end{corollary}}
\def\BE{\begin{example}}
\def\EE{\end{example}}
\def\BD{\begin{definition}}
\def\ED{\end{definition}}
\def\BR{\begin{remark}}
\def\ER{\end{remark}}
\def\BAS{\begin{assumption}}
\def\EAS{\end{assumption}}
\def\BI{\begin{itemize}}
\def\EI{\end{itemize}}
\def\BMP{\begin{minipage}{9.5cm}}
\def\EMP{\end{minipage}}
\def\MPT{\begin{minipage}{11.5cm}}
\def\EPT{\end{minipage}}
\newtheorem{remark}[theorem]{Remark}
\title{On the Complexity of an Augmented Lagrangian Method for Nonconvex Optimization}
\author{G.N. Grapiglia\thanks{Departamento de Matem\'atica, Universidade Federal do Paran\'a, Centro Polit\'ecnico, Cx. postal 19.081, 81531-980, Curitiba, Paran\'a, Brazil (grapiglia@ufpr.br). This author was partially supported by the National Council for Scientific and Technological Development - Brazil (grant 406269/2016-5).}
        \and Y. Yuan\thanks{State Key Laboratory of Scientific/Engineering Computing, Institute of Computational
Mathematics and Scientific/Engineering Computing, Academy of Mathematics
and Systems Science, Chinese Academy of Sciences, Zhong Guan Cun Donglu 55, Beijing 100190, People's Republic of China (yyx@lsec.cc.ac.cn). This author was partially supported by NSFC (grant 11688101)}}
\date{July 24, 2019 (version 5)}
\begin{document}

\maketitle

\begin{abstract}
In this paper we study the worst-case complexity of an inexact Augmented Lagrangian method for nonconvex constrained problems. Assuming that the penalty parameters are bounded, we prove a complexity bound of $\mathcal{O}(|\log(\epsilon)|)$ outer iterations for the referred algorithm to generate an $\epsilon$-approximate KKT point, for $\epsilon\in (0,1)$. When the penalty parameters are unbounded, we prove an outer iteration complexity bound of $\mathcal{O}\left(\epsilon^{-2/(\alpha-1)}\right)$, where $\alpha>1$ controls the rate of increase of the penalty parameters. For linearly constrained problems, these bounds yield to evaluation complexity bounds of $\mathcal{O}(|\log(\epsilon)|^{2}\epsilon^{-2})$ and $\mathcal{O}\left(\epsilon^{-\left(\frac{2(2+\alpha)}{\alpha-1}+2\right)}\right)$, respectively, when appropriate first-order methods ($p=1$) are used to approximately solve the unconstrained subproblems at each iteration. In the case of problems having only linear equality constraints, the latter bounds are improved to
$\mathcal{O}(|\log(\epsilon)|^{2}\epsilon^{-(p+1)/p})$ and $\mathcal{O}\left(\epsilon^{-\left(\frac{4}{\alpha-1}+\frac{p+1}{p}\right)}\right)$, respectively, when appropriate $p$-order methods ($p\geq 2$) are used as inner solvers.
\end{abstract}

\begin{keywords}
nonlinear programming, augmented lagrangian methods, tensor methods, worst-case complexity.
\end{keywords}


\pagestyle{myheadings}
\thispagestyle{plain}
\markboth{Complexity of an Augmented Lagrangian Method}{G.N. Grapiglia and Y. Yuan}

\section{Introduction}
\setcounter{equation}{0}

In this paper we consider the constrained optimization problem
\begin{align}
\min_{x\in\mathbb{R}^{n}}&\quad f(x),\label{eq:1.1}\\
\text{s.t.}              &\quad c_{i}(x) =0,\,\,i=1,\ldots,m_{e}, \label{eq:1.2}\\
                         &\quad c_{i}(x)\geq 0,\,\,i=m_{e}+1,\ldots,m, \label{eq:1.3}
\end{align}
where $f,c_{i}:\mathbb{R}^{n}\to\mathbb{R}$ ($i=1,\ldots,m$) are continuously differentiable functions, possibly nonconvex. Augmented Lagrangian Methods are among the most efficient schemes for nonconvex constrained optimization problems (see \cite{HES,POW,BER,BM}). The theoretical analysis of these methods usually focus on global convergence properties. More specifically, in the case of (\ref{eq:1.1})-(\ref{eq:1.3}), for any starting pair $(x_{0},\lambda^{(0)})\in\mathbb{R}^{n}\times\left(\mathbb{R}^{m_{e}}\times\mathbb{R}^{m-m_{e}}_{+}\right)$, one tries to show that the corresponding sequence $\left\{(x_{k},\lambda^{(k)})\right\}_{k\geq 0}$ generated by the Augmented Lagrangian method possess the following asymptotic property (see, e.g., \cite{AND1} and \cite{AND2}):
\\[0.15cm]
\begin{itemize}
\item[\textbf{GC.}] Given $\epsilon>0$, there exists $k_{0}\in\mathbb{N}$ such that
\begin{equation}
\|\nabla f(x_{k_{0}})-\sum_{i=1}^{m}\lambda_{i}^{(k_{0})}\nabla c_{i}(x_{k_{0}})\|\leq\epsilon,
\label{eq:1.4}
\end{equation}
\begin{equation}
\|c_{E}(x_{k_{0}})\|\leq\epsilon,\quad \|c_{I}^{(-)}(x_{k_{0}})\|\leq\epsilon,
\label{eq:1.5}
\end{equation}
\begin{equation}
\lambda_{i}^{(k_{0})}\geq 0,\quad\forall i\in\left\{m_{e}+1,\ldots,m\right\},
\label{eq:1.6}
\end{equation}
and
\begin{equation}
\lambda_{i}^{(k_{0})}=0\quad\text{whenever}\quad c_{i}(x_{k_{0}})>\epsilon,\quad\forall i\in\left\{m_{e}+1,\ldots,m\right\},
\label{eq:1.7}
\end{equation}
where $c_{E}(x)=(c_{1}(x),\ldots,c_{m_{e}}(x))$, $c_{I}(x)=(c_{m_{e}+1}(x),\ldots,c_{m}(x))$, and for any vector $v$, the corresponding vector $v^{(-)}$ is defined by $v_{i}^{(-)}=\min\left\{v_{i},0\right\}$.
\end{itemize}

In this paper we obtain worst-case complexity bounds for the first $k_{0}$ such that (\ref{eq:1.4})-(\ref{eq:1.7}) hold, when $\left\{(x_{k},\lambda^{(k)})\right\}_{k\geq 0}$ is generated by a certain Augmented Lagrangian method that allows inexact solutions of its subproblems. Assuming that the penalty parameters are bounded, we prove a complexity bound of $\mathcal{O}(|\log(\epsilon)|)$ outer iterations for the referred algorithm to generate an $\epsilon$-approximate KKT point, for $\epsilon\in (0,1)$. When the penalty parameters are unbounded, we prove an outer iteration complexity bound of $\mathcal{O}\left(\epsilon^{-2/(\alpha-1)}\right)$, where $\alpha>1$ controls the rate of increase of the penalty parameters. In light of these results, for linearly constrained problems we are able to obtain evaluation complexity bounds of $\mathcal{O}(|\log(\epsilon)|^{2}\epsilon^{-2})$ and $\mathcal{O}\left(\epsilon^{-\left(\frac{2(2+\alpha)}{\alpha-1}+2\right)}\right)$, respectively, when appropriate first-order methods ($p=1$) are used to approximately solve the unconstrained subproblems at each iteration of our Augmented Lagrangian scheme. For problems having only linear equality constraints, these bounds are improved to $\mathcal{O}(|\log(\epsilon)|^{2}\epsilon^{-(p+1)/p})$ and $\mathcal{O}\left(\epsilon^{-\left(\frac{4}{\alpha-1}+\frac{p+1}{p}\right)}\right)$, respectively, when suitable $p$-order methods ($p\geq 2$) are used as inner solvers.

\subsection{Related Literature}
\cite{HONG} proposed a Primal-Dual Algorithm (Prox-PDA) for the linearly constrained problem
\begin{align}
\min_{x\in\mathbb{R}^{n}}&\quad f(x), \label{eq:stela1.1}\\
\text{s.t.}              &\quad a_{i}^{T}x-b_{i}=0,\,\,i=1,\ldots,m.\label{eq:stela1.2}
\end{align}
It was shown that Prox-PDA enjoys an outer iteration complexity of $\mathcal{O}(\epsilon^{-1})$. Therefore, even when the penalty parameters are unbounded, our outer iteration complexity bound of $\mathcal{O}(\epsilon^{-\frac{2}{\alpha-1}})$ (given in Theorem \ref{thm:3.5}) is bether than the one proved in \cite{HONG}, if we take $\alpha>3$. Regarding the evaluation complexity, when the penalty parameters are bounded, our bound of $\mathcal{O}\left(|\log(\epsilon)|^{2}\epsilon^{-2}\right)$ for $p=1$ (given in Theorem \ref{thm:extra3.1}) is slightly worse (in terms of $\epsilon$) than the bounds of $\mathcal{O}(\epsilon^{-2})$ proved for the schemes proposed by \cite{CGT01,CGT0,CGTmais1} and by \cite{BUE}. 

In the context of second-order methods, \cite{CGTmais2}  proposed a Two-Phase algorithm for the general constrained problem (\ref{eq:1.1})-(\ref{eq:1.3}). When the subproblems in their method are solved by a suitable second-order scheme, they proved a complexity bound of $\mathcal{O}\left(\epsilon^{-\frac{3}{2}}\right)$ problem evaluations. Bounds of the same order for different second-order methods were also proved by \cite{Curtis} for equality constrained problems and by \cite{Birgin3} and \cite{HAESER} for linearly constrained problems. Improve complexity bounds of $\mathcal{O}\left(\epsilon^{-\frac{p+1}{p}}\right)$ for $p$-order methods in the constrained case have been obtained by \cite{Birgin2}, \cite{Martinez} and \cite{CGT2,CGT3}. Thus, assuming that the penalty parameters are bounded, our evaluation complexity bound of $\mathcal{O}\left(|\log(\epsilon)|^{2}\epsilon^{-\frac{p+1}{p}}\right)$ (given in Theorem \ref{thm:4.3}) for problem (\ref{eq:stela1.1})-(\ref{eq:stela1.2}) is slightly worse than the bounds mentioned above\footnote{In fact, we can obtain a bound of $\mathcal{O}\left(|\log(\epsilon)|^{2}\epsilon^{-\frac{p+1}{p}}\right)$ even when the penalty parameters are unbounded. This is discussed on Remark 3.2.}.

Finally, very recently, the worst-case complexity of Augmented Lagrangian methods for nonconvex problems have also been investigated by \cite{Birgin5} and \cite{XIE}. Specifically, \cite{Birgin5} prove a bound of $\mathcal{O}(|\log(\epsilon)|)$ outer iterations for the Augmented Lagrangian method corresponding to Algencan (see \cite{AND1}) to find an $\epsilon$-approximate KKT point (or an approximate stationary point of a infeasibility measure, when the penalty parameter goes to infinity). On the other hand, \cite{XIE} analyze the worst-case complexity of a Proximal Augmented Lagrangian method for problems having only equality constraints. Using a constant penalty parameter $\sigma=\mathcal{O}(\epsilon^{-\eta})$, they show that the referred algorithm takes at most $\mathcal{O}(\epsilon^{-(2-\eta)})$ outer iterations to find an $\epsilon$-approximate first-order (or second-order) KKT point, where $\eta$ is an algorithmic parameter chosen in the interval $[0,2]$. Complexity bounds for the total number of inner iterations are also obtained when the Newton-CG method ($p=2$) proposed by \cite{ROY} is used to solve the subproblems. For problems with linear equality constraints and $\eta=2$, they prove evaluations complexity bounds of $\mathcal{O}(\epsilon^{-3/2})$ and $\mathcal{O}(\epsilon^{-3})$ for approximate first-order and second-order optimality, respectively. 


\subsection{Contents} 
The paper is organized as follows. In section 2, we describe the algorithm and obtain outer iteration complexity bounds. In section 3, we obtain worst-case evaluation complexity bounds for linearly constrained problems. Finally, in section 4, we summarize our results and mention some directions for future research. 

\section{Algorithm and Outer Iteration Complexity Analysis}

In what follows we will consider the following assumptions:
\begin{itemize}
\item[\textbf{A1}] Function $f(\,.\,)$ is continuously differentiable, bounded below by $f_{low}$ and, for every $u\in\mathbb{R}$, the set
\begin{equation}
\mathcal{L}_{f}(u)=\left\{x\in\mathbb{R}^{n}\,|\,f(x)\leq u\right\}
\label{eq:stela}
\end{equation}
is bounded.
\item[\textbf{A2}] There exists $\bar{x}\in\mathbb{R}^{n}$ satisfying (\ref{eq:1.2})-(\ref{eq:1.3}) and, for each $i\in\left\{1,\ldots,m\right\}$, function $c_{i}(\,.\,)$ is continuously differentiable.
\end{itemize}

\noindent The Lagrangian function $L(x,\lambda)$ and the Augmented Lagrangian function $P(x,\lambda,\sigma)$ associated to (\ref{eq:1.1})-(\ref{eq:1.3}) are given respectively by:
\begin{equation}
L(x,\lambda)=f(x)-\sum_{i=1}^{m}\lambda_{i}c_{i}(x),
\label{eq:2.3}
\end{equation}
and
\begin{eqnarray}
P(x,\lambda,\sigma)&=&f(x)+\sum_{i=1}^{m_{e}}\left[-\lambda_{i}c_{i}(x)+\frac{1}{2}\sigma c_{i}(x)^{2}\right]\nonumber\\
                   & &+\sum_{i=m_{e}+1}^{m}\left\{\begin{array}{ll}\left[-\lambda_{i}c_{i}(x)+\frac{1}{2}\sigma c_{i}(x)^{2}\right]&\text{if}\,\,c_{i}(x)<\dfrac{\lambda_{i}}{\sigma},\\ -\frac{1}{2}\lambda_{i}^{2}/\sigma, &\text{otherwise}, \end{array}\right.\label{eq:2.4}\\
                   &=& f(x)+\dfrac{\sigma}{2}\sum_{i=1}^{m_{e}}\left[\left(c_{i}(x)-\dfrac{\lambda_{i}}{\sigma}\right)^{2}-\left(\dfrac{\lambda_{i}}{\sigma}\right)^{2}\right]\nonumber\\
                   & &+\dfrac{\sigma}{2}\sum_{i=m_{e}+1}^{m}\left[\left(c_{i}(x)-\dfrac{\lambda_{i}}{\sigma}\right)_{-}^{2}-\left(\dfrac{\lambda_{i}}{\sigma}\right)^{2}\right],\label{eq:2.5} 
\end{eqnarray}
where $\lambda_{i}$ ($i=1,\ldots,m$) are Lagrange multipliers, $\sigma$ is the penalty parameter, and $(\tau)_{-}\equiv \min\left\{0,\tau\right\}$.

Let us consider the following Augmented Lagrangian Method. It is a slight variation of Algorithm 10.4.1 in \cite{SUN}, with a different rule for updating the penalty parameter. Moreover, we allow inexact solutions of the subproblems.
\\[0.2cm]
\begin{mdframed}
\noindent\textbf{Algorithm 1. Augmented Lagrangian Method}
\\[0.2cm]
\noindent\textbf{Step 0.} Given a feasible point $x_{0}\in\mathbb{R}^{n}$ of (\ref{eq:1.2})-(\ref{eq:1.3}), $\lambda^{(0)}\in\mathbb{R}^{m}$ with $\lambda_{i}^{(0)}\geq 0$ for $i=m_{e}+1,\ldots,m$, $\alpha>1$, $\sigma^{(0)}>0$ and $\gamma,\epsilon\in (0,1)$, set $k:=0$.\\
\noindent\textbf{Step 1.} Find an approximate solution $x_{k+1}$ to 
\begin{equation}
\min_{x\in\mathbb{R}^{n}}\,P(x,\lambda^{(k)},\sigma^{(k)}),
\label{eq:2.6}
\end{equation}
such that
\begin{equation}
P(x_{k+1},\lambda^{(k)},\sigma^{(k)})\leq\min\left\{P(x_{k},\lambda^{(k)},\sigma^{(k)}),P(x_{0},\lambda^{(k)},\sigma^{(k)})\right\},
\label{eq:2.7}
\end{equation}
and
\begin{equation}
\|\nabla_{x}P(x_{k+1},\lambda^{(k)},\sigma^{(k)})\|_{\infty}\leq\epsilon.
\label{eq:2.8}
\end{equation}
\noindent\textbf{Step 2.} Compute 
\begin{equation}
\theta^{(k+1)}=\max\left\{\left\|\frac{\lambda^{(k)}}{\sigma^{(k)}}\right\|_{\infty},\left\|c_{E}(x_{k+1})-\frac{\lambda_{E}^{(k)}}{\sigma^{(k)}}\right\|_{\infty},\|c_{I}^{(-)}(x_{k+1})\|_{\infty}\right\}, 
\label{eq:mais2.8}
\end{equation}
where $\lambda_{E}^{(k)}=(\lambda_{1}^{(k)},\ldots,\lambda_{m_{e}}^{(k)})$. If $k=0$, set $\sigma^{(k+1)}=\sigma^{(k)}$. Otherwise, set
\begin{equation}
\sigma^{(k+1)}=\left\{\begin{array}{ll} \sigma^{(k),}&\text{if}\quad \theta^{(k+1)}\leq\gamma\theta^{(k)},\\
\max\left\{(k+1)^{\alpha},\sigma^{(k)}\right\},&\text{otherwise.}
\end{array}
\right.
\label{eq:2.9}
\end{equation}
\noindent\textbf{Step 3.} Set 
\begin{equation}
\lambda_{i}^{(k+1)}=\lambda_{i}^{(k)}-\sigma^{(k)}c_{i}(x_{k+1}),\,\,\text{for}\,\,i=1,\ldots,m_{e},
\label{eq:extra2.10}
\end{equation}
and
\begin{equation}
\lambda_{i}^{(k+1)}=\max\left\{\lambda_{i}^{(k)}-\sigma^{(k)}c_{i}(x_{k+1}),0\right\}\,\,\text{for}\,\,i=m_{e}+1,\ldots,m.
\label{eq:2.10}
\end{equation}
\noindent\textbf{Step 4.} Set $k:=k+1$ and go back to Step 1.
\end{mdframed}

In this section, our goal is to establish iteration complexity bounds for Algorithm 1, that is, upper bounds on the number of outer iterations necessary to generate a pair $(x_{k},\lambda^{(k)})$ such that
\begin{equation}
\|\nabla_{x}L(x_{k},\lambda^{(k)})\|_{\infty}\leq\epsilon,
\label{eq:3.1}
\end{equation}
\begin{equation}
\|c_{E}(x_{k})\|_{\infty}\leq\epsilon,\quad\|c_{I}^{(-)}(x_{k})\|_{\infty}\leq\epsilon,
\label{eq:mais3.2}
\end{equation}
\begin{equation}
\lambda_{i}^{(k)}\geq 0,\quad\forall i\in\left\{m_{e}+1,\ldots,m\right\},
\label{eq:stela3.3}
\end{equation}
and
\begin{equation}
\lambda_{i}^{(k)}=0\quad\text{whenever}\quad c_{i}(x_{k})>\epsilon,\quad\forall i\in\left\{m_{e}+1,\ldots,m\right\},
\label{eq:mais3.3}
\end{equation}
\noindent for a given $\epsilon\in (0,1)$. For that, we need some auxiliary results. The first one states that the Augmented Lagrangian function is bounded from above by the objective function on points of the feasible set.

\begin{lemma}
\label{lem:3.1}
Suppose that A2 holds. Let $\sigma>0$, $\lambda\in\mathbb{R}^{m}$ with $\lambda_{i}\geq 0$ for $i=m_{e}+1,\ldots,m$, and $\bar{x}\in\mathbb{R}^{n}$ be a feasible point of (\ref{eq:1.2})-(\ref{eq:1.3}). Then,
\begin{equation}
P(\bar{x},\lambda,\sigma)\leq f(\bar{x}).
\label{eq:3.2}
\end{equation}
\end{lemma}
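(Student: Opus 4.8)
The plan is to work directly from the closed-form expression (\ref{eq:2.5}) for the augmented Lagrangian, evaluated at the feasible point $\bar{x}$, and to show that every penalty term contributed by the constraints is nonpositive. First I would substitute $x=\bar{x}$ into (\ref{eq:2.5}) and split the right-hand side into its equality-constraint block (the sum over $i=1,\ldots,m_{e}$) and its inequality-constraint block (the sum over $i=m_{e}+1,\ldots,m$), treating each separately.

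For the equality constraints $i\in\{1,\ldots,m_{e}\}$, feasibility of $\bar{x}$ gives $c_{i}(\bar{x})=0$ by (\ref{eq:1.2}), so each summand $\left(c_{i}(\bar{x})-\lambda_{i}/\sigma\right)^{2}-(\lambda_{i}/\sigma)^{2}$ collapses to $(\lambda_{i}/\sigma)^{2}-(\lambda_{i}/\sigma)^{2}=0$. Hence the entire equality block contributes exactly zero, independently of the sign or magnitude of $\lambda_{i}$.

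For the inequality constraints $i\in\{m_{e}+1,\ldots,m\}$, the summand is $\left(c_{i}(\bar{x})-\lambda_{i}/\sigma\right)_{-}^{2}-(\lambda_{i}/\sigma)^{2}$. Here I would combine feasibility $c_{i}(\bar{x})\geq 0$ (from (\ref{eq:1.3})) with the hypotheses $\lambda_{i}\geq 0$ and $\sigma>0$, which together give $\lambda_{i}/\sigma\geq 0$. A short case analysis then finishes the estimate: if $c_{i}(\bar{x})\geq\lambda_{i}/\sigma$, then $\left(c_{i}(\bar{x})-\lambda_{i}/\sigma\right)_{-}=0$ and the summand equals $-(\lambda_{i}/\sigma)^{2}\leq 0$; if instead $0\leq c_{i}(\bar{x})<\lambda_{i}/\sigma$, then $-\lambda_{i}/\sigma\leq c_{i}(\bar{x})-\lambda_{i}/\sigma<0$, so $\left(c_{i}(\bar{x})-\lambda_{i}/\sigma\right)^{2}\leq(\lambda_{i}/\sigma)^{2}$ and again the summand is $\leq 0$. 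In either case every inequality summand is nonpositive.

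Combining the two blocks with $\sigma>0$, the right-hand side of (\ref{eq:2.5}) at $\bar{x}$ is $f(\bar{x})$ plus a nonpositive quantity, which yields (\ref{eq:3.2}). I do not anticipate any genuine obstacle, since the argument is entirely elementary; the only point deserving slight care is the second subcase of the inequality block, where the crucial observation is the crude bound $|c_{i}(\bar{x})-\lambda_{i}/\sigma|\leq\lambda_{i}/\sigma$, valid precisely because both $c_{i}(\bar{x})$ and $\lambda_{i}/\sigma$ are nonnegative.
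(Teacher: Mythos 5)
Your proposal is correct and follows essentially the same route as the paper: both substitute the feasible point into (\ref{eq:2.5}), kill the equality block via $c_{i}(\bar{x})=0$, and bound each inequality summand by showing $\left(c_{i}(\bar{x})-\lambda_{i}/\sigma\right)_{-}^{2}\leq(\lambda_{i}/\sigma)^{2}$ from $c_{i}(\bar{x})\geq 0$ and $\lambda_{i}/\sigma\geq 0$. The only cosmetic difference is that you argue by two cases where the paper uses the single sandwich $0\geq\left(c_{i}(\bar{x})-\lambda_{i}/\sigma\right)_{-}\geq-\lambda_{i}/\sigma$.
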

\begin{proof}
By assumptions, for $i=m_{e}+1,\ldots,m$, we have $c_{i}(\bar{x})\geq 0$ and $\dfrac{\lambda_{i}}{\sigma}\geq 0$. Thus, 
\begin{equation*}
c_{i}(\bar{x})-\dfrac{\lambda_{i}}{\sigma}\geq -\dfrac{\lambda_{i}}{\sigma}.
\end{equation*}
Consequently,
\begin{equation*}
0\geq \left(c_{i}(\bar{x})-\dfrac{\lambda_{i}}{\sigma}\right)_{-}\geq -\dfrac{\lambda_{i}}{\sigma},
\end{equation*}
which shows that
\begin{equation*}
\left(c_{i}(\bar{x})-\dfrac{\lambda_{i}}{\sigma}\right)_{-}^{2}\leq\left(\dfrac{\lambda_{i}}{\sigma}\right)^{2},\,\,i=m_{e}+1,\ldots,m.
\end{equation*}
Then, (\ref{eq:3.2}) follows from the above inequality, the equality $c_{i}(\bar{x})=0$ for $i=1,\ldots,m_{e}$ and (\ref{eq:2.5}). 
\end{proof}

The lemma below gives a lower bound for the Augmented Lagrangian function.
\begin{lemma}
Let $\sigma>0$ and $\lambda\in\mathbb{R}^{m}$ with $\lambda_{i}\geq 0$ for $i=m_{e}+1,\ldots,m$. Then,
\begin{equation*}
P(x,\lambda,\sigma)\geq f(x)-\dfrac{1}{2}\sum_{i=1}^{m}\dfrac{\left[\lambda_{i}\right]^{2}}{\sigma},\quad\forall x\in\mathbb{R}^{n}.
\end{equation*}
\label{lem:plus2.2}
\end{lemma}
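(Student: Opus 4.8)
The plan is to work directly from the closed-form expression (\ref{eq:2.5}) for the Augmented Lagrangian, subtracting $f(x)$ from both sides so that the claim reduces to a lower bound on the penalty terms alone. First I would write
\[
P(x,\lambda,\sigma)-f(x)=\frac{\sigma}{2}\sum_{i=1}^{m_{e}}\left[\left(c_{i}(x)-\frac{\lambda_{i}}{\sigma}\right)^{2}-\left(\frac{\lambda_{i}}{\sigma}\right)^{2}\right]+\frac{\sigma}{2}\sum_{i=m_{e}+1}^{m}\left[\left(c_{i}(x)-\frac{\lambda_{i}}{\sigma}\right)_{-}^{2}-\left(\frac{\lambda_{i}}{\sigma}\right)^{2}\right],
\]
so that the whole statement becomes a matter of estimating each bracket from below, independently of $x$.

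The key step is the observation that every squared quantity entering with a plus sign is nonnegative. For the equality block this is the trivial bound $\left(c_{i}(x)-\lambda_{i}/\sigma\right)^{2}\geq 0$, and for the inequality block it is $\left(c_{i}(x)-\lambda_{i}/\sigma\right)_{-}^{2}\geq 0$, which holds because the negative part is itself a real number whose square is nonnegative. Discarding these nonnegative contributions bounds each bracket below by $-(\lambda_{i}/\sigma)^{2}$, and summing over $i=1,\ldots,m$ gives
\[
P(x,\lambda,\sigma)-f(x)\geq-\frac{\sigma}{2}\sum_{i=1}^{m}\left(\frac{\lambda_{i}}{\sigma}\right)^{2}=-\frac{1}{2}\sum_{i=1}^{m}\frac{\lambda_{i}^{2}}{\sigma},
\]
which is precisely the claimed inequality after simplification.

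Since the estimate is pointwise in $x$ and imposes no condition on the values $c_{i}(x)$, nothing further needs to be verified. Honestly, there is no substantial obstacle here: unlike Lemma \ref{lem:3.1}, which relied on feasibility and on the sign of $\lambda_{i}/\sigma$ to control a single bracket, the present lower bound uses only that a square is nonnegative, and in fact does not even invoke the hypothesis $\lambda_{i}\geq 0$. The only point deserving mild care is the inequality part, where one should invoke the definition $(\tau)_{-}=\min\{0,\tau\}$ to recognize $(\cdot)_{-}^{2}$ as a genuine square; once that is noted, the proof is immediate.
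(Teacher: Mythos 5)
Your proof is correct and follows essentially the same route as the paper: drop the nonnegative squared terms in the closed-form expression (\ref{eq:2.5}) so that each bracket is bounded below by $-(\lambda_{i}/\sigma)^{2}$, then sum. Your side remark that the hypothesis $\lambda_{i}\geq 0$ is not actually used here is also accurate.
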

\begin{proof}
By (\ref{eq:2.5}), we have
\begin{eqnarray*}
P(x,\lambda,\sigma)&\geq & f(x)-\dfrac{\sigma}{2}\sum_{i=1}^{m_{e}}\left(\dfrac{\lambda_{i}}{\sigma}\right)^{2}-\dfrac{\sigma}{2}\sum_{i=m_{e}+1}^{m}\left(\dfrac{\lambda_{i}}{\sigma}\right)^{2}\\
& = & f(x)-\dfrac{1}{2}\sum_{i=1}^{m}\dfrac{\left[\lambda_{i}\right]^{2}}{\sigma},
\end{eqnarray*}
for all $x\in\mathbb{R}^{n}$.
\end{proof}

In view of Lemmas \ref{lem:3.1} and \ref{lem:plus2.2}, we can show that the iterates of Algorithm 1 are well defined.
\begin{lemma}
Suppose that A1 and A2 hold. Given $\sigma>0$ and $\lambda\in\mathbb{R}^{m}$, with $\lambda_{i}\geq 0$ for $i=m_{e}+1,\ldots,m$, let $P(\,.\,,\lambda,\sigma)$ be defined by (\ref{eq:2.5}). Then, there exists at least one point $x^{+}$ such that 
\begin{equation*}
\nabla_{x} P(x^{+},\lambda,\sigma)=0.
\end{equation*}
\label{lem:plus2.3}
\end{lemma}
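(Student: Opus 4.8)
The plan is to obtain $x^+$ as a global minimizer of $P(\,\cdot\,,\lambda,\sigma)$ over $\mathbb{R}^n$; since $P(\,\cdot\,,\lambda,\sigma)$ is continuously differentiable (each equality term is manifestly $C^1$, and each inequality term is built from $\tau\mapsto(\tau)_-^2$, whose derivative $\tau\mapsto 2(\tau)_-$ is continuous), the first-order necessary condition at such a minimizer yields $\nabla_x P(x^+,\lambda,\sigma)=0$. Thus the task reduces to showing that $P(\,\cdot\,,\lambda,\sigma)$ attains a global minimum, which I would establish by the Weierstrass theorem applied to a suitable compact sublevel set.

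First I would fix the feasible point $\bar{x}$ provided by A2 and consider the sublevel set $S=\left\{x\in\mathbb{R}^n\,|\,P(x,\lambda,\sigma)\leq P(\bar{x},\lambda,\sigma)\right\}$. By continuity of $P(\,\cdot\,,\lambda,\sigma)$ this set is closed, and it is nonempty since $\bar{x}\in S$. The key step is to show $S$ is bounded. For any $x\in S$, Lemma~\ref{lem:plus2.2} gives $f(x)-\tfrac{1}{2}\sum_{i=1}^{m}[\lambda_i]^2/\sigma\leq P(x,\lambda,\sigma)$, while the definition of $S$ together with Lemma~\ref{lem:3.1} gives $P(x,\lambda,\sigma)\leq P(\bar{x},\lambda,\sigma)\leq f(\bar{x})$. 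Combining these two inequalities yields $f(x)\leq f(\bar{x})+\tfrac{1}{2}\sum_{i=1}^{m}[\lambda_i]^2/\sigma=:u$, so that $S\subseteq\mathcal{L}_f(u)$. By A1 the level set $\mathcal{L}_f(u)$ is bounded, hence so is $S$; being closed and bounded, $S$ is compact.

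It then follows from the Weierstrass theorem that $P(\,\cdot\,,\lambda,\sigma)$ attains its minimum over $S$ at some $x^+\in S$. Because every $x\notin S$ satisfies $P(x,\lambda,\sigma)>P(\bar{x},\lambda,\sigma)\geq P(x^+,\lambda,\sigma)$, this $x^+$ is in fact a global minimizer of $P(\,\cdot\,,\lambda,\sigma)$ over all of $\mathbb{R}^n$, and the first-order optimality condition then delivers $\nabla_x P(x^+,\lambda,\sigma)=0$. I expect the only delicate point to be the verification that the penalty term involving $(\tau)_-^2$ is genuinely $C^1$ across the breakpoint $c_i(x)=\lambda_i/\sigma$, so that the stationarity condition is meaningful there; the boundedness argument itself is then a direct consequence of sandwiching $f$ between the bounds of Lemmas~\ref{lem:3.1} and~\ref{lem:plus2.2} and invoking the compactness of level sets guaranteed by A1.
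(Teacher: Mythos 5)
Your proof is correct and follows essentially the same route as the paper's: both bound $P(\,\cdot\,,\lambda,\sigma)$ between $f(x)-\tfrac{1}{2}\sum_{i=1}^{m}\lambda_i^2/\sigma$ and $f(\bar{x})$ via Lemmas \ref{lem:plus2.2} and \ref{lem:3.1}, deduce that the sublevel set of $P$ at $P(\bar{x},\lambda,\sigma)$ sits inside $\mathcal{L}_f(u)$ with $u=f(\bar{x})+\tfrac{1}{2}\sum_{i=1}^{m}\lambda_i^2/\sigma$, and invoke A1 plus Weierstrass to get a global minimizer at which the gradient vanishes. Your explicit remark that $\tau\mapsto(\tau)_-^2$ is $C^1$ across the breakpoint is a detail the paper leaves implicit, and it is a worthwhile addition.
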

\begin{proof}
By A1 and A2, it follows that $P(\,.\,,\lambda,\sigma)$ is differentiable. Thus, it is enough to show that $P(\,.\,,\lambda,\sigma)$ has a global minimizer. For that, given $\bar{x}\in\mathbb{R}^{n}$ feasible, we will show that the set 
\begin{equation*}
\mathcal{L}_{P}(\bar{x})=\left\{z\in\mathbb{R}^{n}\,|\,P(z,\lambda,\sigma)\leq P(\bar{x},\lambda,\sigma)\right\}
\end{equation*}
is compact (then the existence of a global minimizer will follows from Weierstrass Theorem). Indeed, consider $x\in\mathcal{L}_{p}(\bar{x})$. Then, by Lemmas \ref{lem:3.1} and \ref{lem:plus2.2} we have
\begin{equation*}
f(x)-\dfrac{1}{2}\sum_{i=1}^{m}\dfrac{\left[\lambda_{i}\right]^{2}}{\sigma}\leq P(x,\lambda,\sigma)\leq P(\bar{x},\lambda,\sigma)\leq f(\bar{x})
\end{equation*}
\begin{equation*}
\Longrightarrow f(x)\leq f(\bar{x})+\dfrac{1}{2}\sum_{i=1}^{m}\dfrac{\left[\lambda_{i}\right]^{2}}{\sigma},
\end{equation*}
and so, $x\in\mathcal{L}_{f}(u)$ for $u=f(\bar{x})+\frac{1}{2}\sum_{i=1}^{m}\left[\lambda_{i}\right]^{2}/\sigma$, where $\mathcal{L}_{f}(u)$ is defined in (\ref{eq:stela}). This means that $\mathcal{L}_{P}(\bar{x})\subset\mathcal{L}_{f}(u)$. Since $\mathcal{L}_{f}(u)$ is bounded (due to A1), it follows that $\mathcal{L}_{P}(\bar{x})$ is also bounded. Then, by the continuity of $P(\,.\,,\lambda,\sigma)$ we conclude that $\mathcal{L}_{P}(\bar{x})$ is compact.
\end{proof}

The next lemma provides an upper bound for the constraint violation on infeasible points.
\begin{lemma}
\label{lem:3.2}
Let $\sigma>0$, $\lambda\in\mathbb{R}^{m}$ with $\lambda_{i}\geq 0$ for $i=m_{e}+1,\ldots,m$, and $\bar{x}\in\mathbb{R}^{n}$ be a feasible point of (\ref{eq:1.2})-(\ref{eq:1.3}). If $x^{+}$ is an infeasible point of (\ref{eq:1.2})-(\ref{eq:1.3}) such that  
\begin{equation}
P(x^{+},\lambda,\sigma)\leq P(\bar{x},\lambda,\sigma),
\label{eq:3.4}
\end{equation}
then
\begin{equation}
\dfrac{1}{2}\left(\sum_{i=1}^{m}\dfrac{\lambda_{i}^{2}}{\sigma}\right)+f(\bar{x})-f(x^{+})\geq\dfrac{\sigma}{2} \max\left\{\left\|c_{E}(x^{+})-\frac{\lambda_{E}}{\sigma}\right\|_{\infty},\|c_{I}^{(-)}(x^{+})\|_{\infty}\right\}^{2},
\label{eq:3.5}
\end{equation}
where $\lambda_{E}=(\lambda_{1},\ldots,\lambda_{m_{e}})$.
\end{lemma}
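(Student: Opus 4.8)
The target inequality (\ref{eq:3.5}) is essentially a rearrangement of the lower bound from Lemma~\ref{lem:plus2.2} applied to $x^{+}$, combined with the hypothesis (\ref{eq:3.4}) and Lemma~\ref{lem:3.1} applied to $\bar{x}$. The plan is to start from the defining expression (\ref{eq:2.5}) for $P(x^{+},\lambda,\sigma)$, keep the squared terms $\left(c_{i}(x^{+})-\frac{\lambda_{i}}{\sigma}\right)^{2}$ (for $i=1,\ldots,m_{e}$) and $\left(c_{i}(x^{+})-\frac{\lambda_{i}}{\sigma}\right)_{-}^{2}$ (for $i=m_{e}+1,\ldots,m$) explicitly, and move everything else to the other side. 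Concretely, I would write
\begin{equation*}
P(x^{+},\lambda,\sigma)=f(x^{+})+\frac{\sigma}{2}\sum_{i=1}^{m_{e}}\left(c_{i}(x^{+})-\frac{\lambda_{i}}{\sigma}\right)^{2}+\frac{\sigma}{2}\sum_{i=m_{e}+1}^{m}\left(c_{i}(x^{+})-\frac{\lambda_{i}}{\sigma}\right)_{-}^{2}-\frac{1}{2}\sum_{i=1}^{m}\frac{\lambda_{i}^{2}}{\sigma},
\end{equation*}
and then invoke the chain $P(x^{+},\lambda,\sigma)\le P(\bar{x},\lambda,\sigma)\le f(\bar{x})$, where the first inequality is the hypothesis (\ref{eq:3.4}) and the second is Lemma~\ref{lem:3.1}.

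Combining these gives, after isolating the two sums of squares,
\begin{equation*}
\frac{\sigma}{2}\sum_{i=1}^{m_{e}}\left(c_{i}(x^{+})-\frac{\lambda_{i}}{\sigma}\right)^{2}+\frac{\sigma}{2}\sum_{i=m_{e}+1}^{m}\left(c_{i}(x^{+})-\frac{\lambda_{i}}{\sigma}\right)_{-}^{2}\le\frac{1}{2}\sum_{i=1}^{m}\frac{\lambda_{i}^{2}}{\sigma}+f(\bar{x})-f(x^{+}).
\end{equation*}
The left-hand side is exactly $\frac{\sigma}{2}$ times the sum of the squared Euclidean norms $\|c_{E}(x^{+})-\frac{\lambda_{E}}{\sigma}\|_{2}^{2}+\|c_{I}^{(-)}(x^{+})\|_{2}^{2}$, so the remaining step is to pass from these two summed squares to the $\max$ of their infinity norms.

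For that final step I would note that each of the two sums of squares is bounded below by the corresponding squared infinity norm (since $\sum_{i} v_{i}^{2}\ge\max_{i}v_{i}^{2}=\|v\|_{\infty}^{2}$), hence the full left-hand side dominates both $\frac{\sigma}{2}\|c_{E}(x^{+})-\frac{\lambda_{E}}{\sigma}\|_{\infty}^{2}$ and $\frac{\sigma}{2}\|c_{I}^{(-)}(x^{+})\|_{\infty}^{2}$ separately, and therefore dominates $\frac{\sigma}{2}$ times their maximum, which is precisely the right-hand side of (\ref{eq:3.5}). Chaining this with the displayed bound yields the claim. I do not expect a genuine obstacle here; the only point requiring mild care is the bookkeeping when translating the definition (\ref{eq:2.5}) into the clean separated form above and confirming that the term $\left(c_{i}(x^{+})-\frac{\lambda_{i}}{\sigma}\right)_{-}^{2}$ for the inequality constraints matches $(c_{I}^{(-)}(x^{+}))_{i}^{2}$ up to the shift by $\lambda_{i}/\sigma$. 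The infeasibility hypothesis on $x^{+}$ is not actually needed for the inequality itself, but it guarantees the left-hand side is nonzero, which is what makes the bound useful in the subsequent analysis.
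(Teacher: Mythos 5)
Your overall route is the same as the paper's: chain $P(x^{+},\lambda,\sigma)\le P(\bar{x},\lambda,\sigma)\le f(\bar{x})$ using the hypothesis and Lemma~\ref{lem:3.1}, expand $P(x^{+},\lambda,\sigma)$ via (\ref{eq:2.5}), isolate the sums of squares, and pass to infinity norms at the end. The equality-constraint part and the final norm comparison are fine.

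The one place where the lemma requires an actual argument is exactly the place you gloss over, and your statement of it is wrong as written: the inequality-constraint sum $\sum_{i=m_{e}+1}^{m}\bigl(c_{i}(x^{+})-\frac{\lambda_{i}}{\sigma}\bigr)_{-}^{2}$ is \emph{not} ``exactly'' $\|c_{I}^{(-)}(x^{+})\|_{2}^{2}$; the shift by $\lambda_{i}/\sigma$ sits inside the negative-part operation, so the two quantities generally differ (e.g.\ a coordinate with $c_{i}(x^{+})>0$ but $c_{i}(x^{+})<\lambda_{i}/\sigma$ contributes to the first sum but not to the second). What you need, and what the paper proves via the index set $J=\{i>m_{e}\,:\,c_{i}(x^{+})<0\}$, is the one-sided bound
\begin{equation*}
\Bigl(c_{i}(x^{+})-\frac{\lambda_{i}}{\sigma}\Bigr)_{-}^{2}\;\ge\;\min\{c_{i}(x^{+}),0\}^{2},
\end{equation*}
which holds because $\lambda_{i}\ge 0$ and $\sigma>0$ force $c_{i}(x^{+})-\frac{\lambda_{i}}{\sigma}\le c_{i}(x^{+})$, so whenever $c_{i}(x^{+})<0$ the shifted quantity is also negative and of larger magnitude. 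This is the only step of the lemma that uses the sign condition on the inequality multipliers, and a lower bound (not an identity) is all that is true and all that is needed. You half-flagged this (``up to the shift''), so the fix is local, but as stated the ``exactly'' claim is a genuine error at the crux of the proof. Your closing remark that infeasibility of $x^{+}$ is not actually needed for the inequality is correct and consistent with the paper's argument.
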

\begin{proof}
From (\ref{eq:3.4}), Lemma \ref{lem:3.1} and (\ref{eq:2.5}), it follows that
\begin{eqnarray*}
0&\leq & P(\bar{x},\lambda,\sigma)-P(x^{+},\lambda,\sigma)\\
 &\leq & f(\bar{x})-f(x^{+})-\dfrac{\sigma}{2}\sum_{i=1}^{m_{e}}\left[\left(c_{i}(x^{+})-\dfrac{\lambda_{i}}{\sigma}\right)^{2}-\left(\dfrac{\lambda_{i}}{\sigma}\right)^{2}\right]\\
 &     &-\dfrac{\sigma}{2}\sum_{i=m_{e}+1}^{m}\left[\left(c_{i}(x^{+})-\dfrac{\lambda_{i}}{\sigma}\right)^{2}_{-}-\left(\dfrac{\lambda_{i}}{\sigma}\right)^{2}\right]\\
 & = & f(\bar{x})-f(x^{+})+\dfrac{\sigma}{2}\sum_{i=1}^{m}\left(\dfrac{\lambda_{i}}{\sigma}\right)^{2}-\dfrac{\sigma}{2}\sum_{i=1}^{m_{e}}\left(c_{i}(x^{+})-\dfrac{\lambda_{i}}{\sigma}\right)^{2}\\
 &   &-\dfrac{\sigma}{2}\sum_{i=m_{e}+1}^{m}\left(c_{i}(x^{+})-\dfrac{\lambda_{i}}{\sigma}\right)^{2}_{-}.
\end{eqnarray*}
Therefore,
\begin{equation}
\dfrac{1}{2}\left(\sum_{i=1}^{m}\dfrac{\lambda_{i}^{2}}{\sigma}\right)+f(\bar{x})-f(x^{+})\geq\dfrac{\sigma}{2}\max\left\{\left\|c_{E}(x^{+})-\frac{\lambda_{E}}{\sigma}\right\|_{2}^{2},\sum_{i=m_{e}+1}^{m}\left(c_{i}(x^{+})-\dfrac{\lambda_{i}}{\sigma}\right)^{2}_{-}\right\}.
\label{eq:geo1}
\end{equation}
Let $J=\left\{j\in\left\{m_{e}+1,\ldots,m\right\}\,|\,c_{j}(x^{+})<0\right\}$. Since $x^{+}$ is an infeasible point to (\ref{eq:1.2})-(\ref{eq:1.3}), we may have $J\neq\emptyset$. In this case,
\begin{equation}
\sum_{i=m_{e}+1}^{m}\left(c_{i}(x^{+})-\dfrac{\lambda_{i}}{\sigma}\right)^{2}_{-}\geq \sum_{i\in J}\left(c_{i}(x^{+})-\dfrac{\lambda_{i}}{\sigma}\right)^{2}_{-}= \sum_{i\in J}\left(c_{i}(x^{+})-\dfrac{\lambda_{i}}{\sigma}\right)^{2}.
\label{eq:stela1}
\end{equation}
For $i\in J$, we have
\begin{equation*}
c_{i}(x^{+})-\dfrac{\lambda_{i}}{\sigma}\leq c_{i}(x^{+})<0,
\end{equation*}
which gives
\begin{eqnarray}
\sum_{i\in J}\left(c_{i}(x^{+})-\dfrac{\lambda_{i}}{\sigma}\right)^{2}&\geq&\sum_{i\in J}c_{i}(x^{+})^{2}=\sum_{i=m_{e}+1}^{m}\min\left\{c_{i}(x^{+}),0\right\}^{2}\nonumber\\
& = &\sum_{i=m_{e}+1}^{m}c_{i}^{(-)}(x^{+})^{2}\nonumber\\
& = &\|c^{(-)}_{I}(x^{+})\|_{2}^{2}.
\label{eq:stela2}
\end{eqnarray}
Thus, combining (\ref{eq:geo1}), (\ref{eq:stela1}), (\ref{eq:stela2}) and the norm inequality $\|\,.\,\|_{2}\geq\|\,.\,\|_{\infty}$, we get (\ref{eq:3.5}).

\end{proof}

The following lemma specializes the upper bound from Lemma \ref{lem:3.2} to points generated by Algorithm 1.
\begin{lemma}
\label{lem:3.3}
Suppose that A1 and A2 hold, and let the sequence $\left\{(x_{k},\lambda^{(k)},\sigma^{(k)})\right\}_{k\geq 0}$ be generated by Algorithm 1. Then, for all $k\geq 1$, we have
\begin{equation}
k\left[\|\mu^{(0)}\|_{2}^{2}+4(f(x_{0})-f_{low})\right]\geq\sigma^{(k)}\max\left\{\left\|c_{E}(x_{k+1})-\frac{\lambda_{E}^{(k)}}{\sigma^{(k)}}\right\|_{\infty},\|c_{I}^{(-)}(x_{k+1})\|_{\infty}\right\}^{2},
\label{eq:3.6}
\end{equation}
where
\begin{equation}
\mu_{i}^{(k)}=\dfrac{\lambda_{i}^{(k)}}{\sqrt{\sigma^{(k)}}},\,\,i=1,\ldots,m.
\label{eq:3.7}
\end{equation}
\end{lemma}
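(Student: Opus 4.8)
The plan is to combine the constraint-violation bound behind Lemma~\ref{lem:3.2} with the multiplier update rules (\ref{eq:extra2.10})--(\ref{eq:2.10}) to obtain a one-step recursion for the scaled multipliers $\mu^{(k)}$, and then to telescope. The starting point is that, by the descent condition (\ref{eq:2.7}), at every iteration $j$ we have $P(x_{j+1},\lambda^{(j)},\sigma^{(j)})\le P(x_0,\lambda^{(j)},\sigma^{(j)})$, and $x_0$ is feasible by Step~0; hence the hypotheses behind Lemma~\ref{lem:3.2} hold with $\bar x=x_0$, $\lambda=\lambda^{(j)}$, $\sigma=\sigma^{(j)}$ and $x^+=x_{j+1}$. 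I would work not with (\ref{eq:3.5}) itself but with the \emph{sum-form} bound produced inside the proof of Lemma~\ref{lem:3.2} (before the max is taken), namely
\begin{equation*}
\tfrac12\|\mu^{(j)}\|_2^2+f(x_0)-f(x_{j+1})\ \ge\ \tfrac{\sigma^{(j)}}{2}\left[\left\|c_E(x_{j+1})-\tfrac{\lambda_E^{(j)}}{\sigma^{(j)}}\right\|_2^2+\sum_{i=m_e+1}^m\left(c_i(x_{j+1})-\tfrac{\lambda_i^{(j)}}{\sigma^{(j)}}\right)_-^2\right],
\end{equation*}
which is stronger than (\ref{eq:3.5}) and, coming only from (\ref{eq:2.5}), Lemma~\ref{lem:3.1} and (\ref{eq:2.7}), holds regardless of whether $x_{j+1}$ is feasible.

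The key algebraic step is to identify the bracketed quantity with $\sum_{i=1}^m(\lambda_i^{(j+1)}/\sigma^{(j)})^2$. Dividing (\ref{eq:extra2.10}) by $\sigma^{(j)}$ gives $\lambda_i^{(j+1)}/\sigma^{(j)}=-\bigl(c_i(x_{j+1})-\lambda_i^{(j)}/\sigma^{(j)}\bigr)$ on the equality block, while dividing (\ref{eq:2.10}) by $\sigma^{(j)}$ gives $\lambda_i^{(j+1)}/\sigma^{(j)}=-\bigl(c_i(x_{j+1})-\lambda_i^{(j)}/\sigma^{(j)}\bigr)_-$ on the inequality block; squaring and summing reproduces the bracket exactly. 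Thus the right-hand side equals $\tfrac{1}{2\sigma^{(j)}}\sum_{i=1}^m(\lambda_i^{(j+1)})^2$, and since $\sigma^{(j+1)}\ge\sigma^{(j)}$ by (\ref{eq:2.9}) this is at least $\tfrac12\|\mu^{(j+1)}\|_2^2$. Using $f(x_{j+1})\ge f_{low}$ I then obtain the recursion
\begin{equation*}
\|\mu^{(j+1)}\|_2^2\ \le\ \|\mu^{(j)}\|_2^2+2\bigl(f(x_0)-f_{low}\bigr),\qquad j\ge0.
\end{equation*}

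Telescoping from $j=0$ to $j=k-1$ yields $\|\mu^{(k)}\|_2^2\le\|\mu^{(0)}\|_2^2+2k(f(x_0)-f_{low})$. To finish, I would apply the same sum-form bound once more at iteration $k$ and weaken it, via $\|\cdot\|_2\ge\|\cdot\|_\infty$ and the componentwise inequality $\bigl(c_i(x_{k+1})-\lambda_i^{(k)}/\sigma^{(k)}\bigr)_-^2\ge c_i^{(-)}(x_{k+1})^2$, to the $\infty$-norm max form (\ref{eq:3.5}), giving
\begin{equation*}
\tfrac{\sigma^{(k)}}{2}\max\left\{\left\|c_E(x_{k+1})-\tfrac{\lambda_E^{(k)}}{\sigma^{(k)}}\right\|_\infty,\|c_I^{(-)}(x_{k+1})\|_\infty\right\}^2\ \le\ \tfrac12\|\mu^{(k)}\|_2^2+\bigl(f(x_0)-f_{low}\bigr).
\end{equation*}
Substituting the telescoped bound and multiplying by $2$ gives $\sigma^{(k)}\max\{\cdots\}^2\le\|\mu^{(0)}\|_2^2+2(k+1)(f(x_0)-f_{low})$, and (\ref{eq:3.6}) follows for $k\ge1$ from the crude estimates $\|\mu^{(0)}\|_2^2\le k\|\mu^{(0)}\|_2^2$ and $2(k+1)\le4k$.

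The main obstacle, deserving the most care, is the passage from the max-form statement of Lemma~\ref{lem:3.2} to the sum-form bound needed to close the recursion: only the sum controls $\|\mu^{(j+1)}\|_2^2$, whereas replacing it by the max would cost a factor of two per step and destroy the telescoping. I would therefore invoke the intermediate inequality established within the proof of Lemma~\ref{lem:3.2}, or re-derive it directly. Two secondary points to verify are that the bracket-to-multiplier identity holds componentwise for the inequality constraints (the projection $(\cdot)_-$ in the bound must match the $\max\{\cdot,0\}$ in (\ref{eq:2.10})), and that the monotonicity $\sigma^{(j+1)}\ge\sigma^{(j)}$ from (\ref{eq:2.9}) is used in the correct direction when passing from $\sum(\lambda_i^{(j+1)})^2/\sigma^{(j)}$ to $\|\mu^{(j+1)}\|_2^2$.
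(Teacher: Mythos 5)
Your proof is correct and follows essentially the same route as the paper's: the one-step recursion $\|\mu^{(j+1)}\|_2^2\le\|\mu^{(j)}\|_2^2+2(f(x_0)-f_{low})$ (which the paper derives from the identity $\sum_{i=1}^m[\lambda_i^{(j+1)}]^2/\sigma^{(j)}=\|\mu^{(j)}\|_2^2+2[P(x_{j+1},\lambda^{(j)},\sigma^{(j)})-f(x_{j+1})]$ --- the same computation as your bracket-to-multiplier identity), telescoping, and a final application of the constraint-violation bound of Lemma~\ref{lem:3.2}. Your sum-form restatement even smooths over a small wrinkle in the paper, which invokes Lemma~\ref{lem:3.2} as stated despite its hypothesis that $x_{k+1}$ be infeasible.
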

\begin{proof}
Since $x_{0}$ is a feasible point of (\ref{eq:1.2})-(\ref{eq:1.3}), it follows from (\ref{eq:2.7}), Lemma \ref{lem:3.2}, (\ref{eq:3.7}) and the bound $f(x_{k+1})\geq f_{low}$ that
\begin{equation}
\dfrac{1}{2}\|\mu^{(k)}\|_{2}^{2}+f(x_{0})-f_{low}\geq\dfrac{\sigma^{(k)}}{2}\max\left\{\left\|c_{E}(x_{k+1})-\frac{\lambda_{E}^{(k)}}{\sigma^{(k)}}\right\|_{\infty},\|c_{I}^{(-)}(x_{k+1})\|_{\infty}\right\}^{2}.
\label{eq:3.8}
\end{equation}
Now, let us compute an upper bound for $\|\mu^{(k)}\|_{2}^{2}$. From (\ref{eq:3.7}), (\ref{eq:2.9}), (\ref{eq:extra2.10}), (\ref{eq:2.10}), (\ref{eq:2.4}), Lemma \ref{lem:3.1} and (\ref{eq:2.7}), we have
\begin{eqnarray*}
\|\mu^{(k+1)}\|_{2}^{2}& = &\sum_{i=1}^{m}\dfrac{[\lambda_{i}^{(k+1)}]^{2}}{\sigma^{(k+1)}}\leq\sum_{i=1}^{m}\dfrac{[\lambda_{i}^{(k+1)}]^{2}}{\sigma^{(k)}}\\
& = & \sum_{i=1}^{m}\dfrac{[\lambda_{i}^{(k)}]^{2}}{\sigma^{(k)}}+\sum_{i=1}^{m_{e}}\dfrac{\left(\lambda_{i}^{(k)}-\sigma^{(k)}c_{i}(x_{k+1})\right)^{2}-[\lambda_{i}^{(k)}]^{2}}{\sigma^{(k)}}\\
&   &+\sum_{i=m_{e}+1}^{m}\dfrac{\left(\max\left\{\lambda_{i}^{(k)}-\sigma^{(k)}c_{i}(x_{k+1}),0\right\}\right)^{2}-[\lambda_{i}^{(k)}]^{2}}{\sigma^{(k)}}\\
& = & \|\mu^{(k)}\|_{2}^{2}+2\sum_{i=1}^{m_{e}}\left[-\lambda_{i}^{(k)}c_{i}(x_{k+1})+\frac{1}{2}\sigma^{(k)}c_{i}(x_{k+1})^{2}\right]\\
&   &+2\sum_{i=m_{e}+1}^{m}\left\{\begin{array}{ll} \left[-\lambda_{i}^{(k)}c_{i}(x_{k+1})+\frac{1}{2}\sigma^{(k)}c_{i}(x_{k+1})^{2}\right],&\text{if}\,\,c_{i}(x_{k+1})<\frac{\lambda_{i}^{(k)}}{\sigma^{(k)}}\\
-\dfrac{[\lambda_{i}^{(k)}]^{2}}{2\sigma^{(k)}},&\text{otherwise}
                                                  \end{array}
                                           \right.\\
& = & \|\mu^{(k)}\|_{2}^{2}+2\left[P(x_{k+1},\lambda^{(k)},\sigma^{(k)})-f(x_{k+1})\right]\\
&\leq & \|\mu^{(k)}\|_{2}^{2}+2\left[P(x_{k+1},\lambda^{(k)},\sigma^{(k)})-f(x_{k+1})\right]-2\left[P(x_{0},\lambda^{(k)},\sigma^{(k)})-f(x_{0})\right]\\
& = & \|\mu^{(k)}\|_{2}^{2}+2\left[P(x_{k+1},\lambda^{(k)},\sigma^{(k)})-P(x_{0},\lambda^{(k)},\sigma^{(k)})\right]+2\left[f(x_{0})-f(x_{k+1})\right]\\
&\leq & \|\mu^{(k)}\|_{2}^{2}+2\left[f(x_{0})-f_{low}\right].
\end{eqnarray*}
Therefore, by induction, 
\begin{equation}
\|\mu^{(k)}\|_{2}^{2}\leq \|\mu^{(0)}\|_{2}^{2}+2\left(f(x_{0})-f_{low}\right)k
\label{eq:3.9}
\end{equation}
Finally, combining (\ref{eq:3.8}) and (\ref{eq:3.9}) we get (\ref{eq:3.6}).
\end{proof}

Our last auxiliary result gives an upper bound for $\theta^{(k)}$.
\begin{lemma}
\label{lem:mais3.3}
Suppose that A1 and A2 hold, and let the sequence $\left\{(x_{k},\lambda^{(k)},\sigma^{(k)})\right\}_{k\geq 0}$ be generated by Algorithm 1. Then 
\begin{equation}
k\left[\|\mu^{(0)}\|_{2}^{2}+4(f(x_{0})-f_{low})\right]\geq\sigma^{(k)}\left[\theta^{(k+1)}\right]^{2},\quad\forall k\geq 1,
\label{eq:mais3.6}
\end{equation}
where $\theta^{(k)}$ is defined in (\ref{eq:mais2.8}).
\end{lemma}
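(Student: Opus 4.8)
The plan is to reduce the statement to Lemma~\ref{lem:3.3}. Observe that, by its definition in (\ref{eq:mais2.8}), $\theta^{(k+1)}$ is the maximum of three nonnegative quantities, and hence $\sigma^{(k)}[\theta^{(k+1)}]^{2}$ equals the maximum over those three quantities of $\sigma^{(k)}$ times their squares. Two of them, $\|c_{E}(x_{k+1})-\lambda_{E}^{(k)}/\sigma^{(k)}\|_{\infty}$ and $\|c_{I}^{(-)}(x_{k+1})\|_{\infty}$, are already controlled through (\ref{eq:3.6}) of Lemma~\ref{lem:3.3}, so the only genuinely new contribution comes from the term $\|\lambda^{(k)}/\sigma^{(k)}\|_{\infty}$.

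To handle that remaining term, I would rewrite it in terms of the scaled multipliers $\mu^{(k)}$ from (\ref{eq:3.7}). Since $\lambda_{i}^{(k)}/\sigma^{(k)}=\mu_{i}^{(k)}/\sqrt{\sigma^{(k)}}$, we obtain
\begin{equation*}
\sigma^{(k)}\left\|\frac{\lambda^{(k)}}{\sigma^{(k)}}\right\|_{\infty}^{2}=\|\mu^{(k)}\|_{\infty}^{2}\leq\|\mu^{(k)}\|_{2}^{2}\leq\|\mu^{(0)}\|_{2}^{2}+2\left(f(x_{0})-f_{low}\right)k,
\end{equation*}
where the last inequality is exactly (\ref{eq:3.9}), already established inside the proof of Lemma~\ref{lem:3.3}.

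Finally, for $k\geq 1$ one absorbs the constant term into the $k$-multiplied right-hand side: since $\|\mu^{(0)}\|_{2}^{2}\leq k\|\mu^{(0)}\|_{2}^{2}$ and $f(x_{0})\geq f_{low}$ by A1, the bound above is at most $k[\|\mu^{(0)}\|_{2}^{2}+4(f(x_{0})-f_{low})]$, which is precisely the right-hand side of (\ref{eq:mais3.6}). Combining this with (\ref{eq:3.6}) and taking the maximum over the three terms then yields (\ref{eq:mais3.6}). I expect no serious obstacle here: the argument is essentially a restatement of Lemma~\ref{lem:3.3} augmented by the elementary identity $\sigma^{(k)}\|\lambda^{(k)}/\sigma^{(k)}\|_{\infty}^{2}=\|\mu^{(k)}\|_{\infty}^{2}$ together with $\|\cdot\|_{\infty}\leq\|\cdot\|_{2}$, and the only point requiring a little care is the bookkeeping that for $k\geq 1$ both the leftover constant $\|\mu^{(0)}\|_{2}^{2}$ and the coefficient $2$ fit comfortably under the common bound with coefficient $4$.
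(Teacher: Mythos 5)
Your proposal is correct and follows essentially the same route as the paper: the paper likewise reduces the new term $\sigma^{(k)}\|\lambda^{(k)}/\sigma^{(k)}\|_{\infty}^{2}$ to $\|\mu^{(k)}\|_{\infty}^{2}\leq\|\mu^{(k)}\|_{2}^{2}$, invokes the bound (\ref{eq:3.9}) established inside the proof of Lemma \ref{lem:3.3}, absorbs the constant using $k\geq 1$, and combines with (\ref{eq:3.6}) and the definition (\ref{eq:mais2.8}). No discrepancies to report.
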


\begin{proof}
By (\ref{eq:3.7}), we have
\begin{equation}
\|\mu^{(k)}\|_{2}\geq\|\mu^{(k)}\|_{\infty}=\left\|\frac{\lambda^{(k)}}{\sqrt{\sigma^{(k)}}}\right\|_{\infty}=\left\|\frac{\lambda^{(k)}}{\sigma^{(k)}}\right\|_{\infty}\sqrt{\sigma^{(k)}}.
\label{eq:mais3.7}
\end{equation}
Thus, combining (\ref{eq:mais3.7}) and (\ref{eq:3.9}), it follows that
\begin{eqnarray}
\sigma^{(k)}\left\|\frac{\lambda^{(k)}}{\sigma^{(k)}}\right\|_{\infty}^{2}&\leq &\|\mu^{(k)}\|_{2}^{2}\nonumber\\
                                         &\leq & \|\mu^{(0)}\|_{2}^{2}+2(f(x_{0})-f_{low})k\nonumber\\
                                         &\leq &\left[\|\mu^{(0)}\|_{2}^{2}+4\left(f(x_{0})-f_{low}\right)\right]k.
\label{eq:geo2}
\end{eqnarray}
Finally, combining (\ref{eq:geo2}), (\ref{eq:3.6}) and (\ref{eq:mais2.8}), we obtain (\ref{eq:mais3.6}).
\end{proof}

Now, we will analyse the outer iteration complexity of Algorithm 1 considering separately the following cases:
\begin{itemize}
\item[(i)] $\left\{\sigma^{(k)}\right\}_{k\geq 0}$ is bounded from above by $\sigma_{max}$.
\item[(ii)] $\lim_{k\to +\infty}\sigma^{(k)}=+\infty$.
\end{itemize}

By (\ref{eq:2.3}), (\ref{eq:extra2.10}), (\ref{eq:2.10}), (\ref{eq:2.4}) and (\ref{eq:2.8}), we have
\begin{eqnarray*}
\|\nabla_{x}L(x_{k},\lambda^{(k)})\|_{\infty}&=&\left\|\nabla f(x_{k})-\sum_{i=1}^{m}\lambda_{i}^{(k)}\nabla c_{i}(x_{k})\right\|_{\infty}\\
& = & \left\|\nabla f(x_{k})-\sum_{i=1}^{m_{e}}\left(\lambda_{i}^{(k-1)}-\sigma^{(k-1)}c_{i}(x_{k})\right)\nabla c_{i}(x_{k})\right.\\
&   &\left.-\sum_{i=m_{e}+1}^{m}\max\left\{\lambda_{i}^{(k-1)}-\sigma^{(k-1)}c_{i}(x_{k}),0\right\}\nabla c_{i}(x_{k})\right\|_{\infty}\\
& = & \|\nabla_{x}P(x_{k},\lambda^{(k-1)},\sigma^{(k-1)})\|_{\infty}\\
& \leq & \epsilon,
\end{eqnarray*}
for all $k\geq 1$. Thus, to bound the number of outer iterations necessary to ensure (\ref{eq:3.1})-(\ref{eq:mais3.3}), we only need to bound the number of iterations in which (\ref{eq:mais3.2}) or (\ref{eq:mais3.3}) does not hold. Note that, if 
\begin{equation}
\theta^{(k)}\leq\epsilon
\label{eq:mais3.8}
\end{equation}
and $c_{i}(x_{k})>\epsilon$, then
\begin{equation*}
\dfrac{\lambda_{i}^{(k-1)}}{\sigma^{(k-1)}}-c_{i}(x_{k})<\epsilon-\epsilon=0,
\end{equation*}
and so
\begin{equation*}
\lambda_{i}^{(k-1)}-\sigma^{(k-1)}c_{i}(x_{k})<0.
\end{equation*}
Therefore, if (\ref{eq:mais3.8}) holds, by (\ref{eq:2.10}) we have
\begin{equation*}
\lambda_{i}^{(k)}=0\quad\text{whenever}\quad c_{i}(x_{k})>\epsilon,\quad\forall i\in\left\{m_{e}+1,\ldots,m\right\}.
\end{equation*}
On the other hand, it is easy to see that
\begin{equation*}
\theta^{(k)}\leq\dfrac{\epsilon}{2}\Longrightarrow \max\left\{\|c_{E}(x_{k})\|_{\infty},\|c_{I}^{(-)}(x_{k})\|_{\infty}\right\}\leq\epsilon.
\end{equation*}
In view of these remarks, all we have to do is to bound the number of iterations in which
\begin{equation*}
\theta^{(k)}>\dfrac{\epsilon}{2}.
\end{equation*}

The theorem below gives an upper bound of $\mathcal{O}\left(|\log(\epsilon)|\right)$ iterations in case (i).
\begin{theorem}
\label{thm:3.4}
Suppose that A1 and A2 hold, and let the sequence $\left\{(x_{k},\lambda^{(k)},\sigma^{(k)})\right\}_{k\geq 0}$ be generated by Algorithm 1. Moreover, assume that $\left\{\sigma^{(k)}\right\}_{k\geq 0}$ is bounded from above by $\sigma_{max}$. If 
\begin{equation}
T>\sigma_{max}^{\frac{1}{\alpha}}+2+\dfrac{\frac{1}{2}\log\left(\|\mu^{(0)}\|_{2}^{2}+4\left[f(x_{0})-f_{low}\right]\right)+\log(2)+|\log(\epsilon)|}{\log(\gamma^{-1})}
\label{eq:3.11}
\end{equation}
then
\begin{equation}
\theta^{(T)}\leq \dfrac{\epsilon}{2},
\label{eq:3.10}
\end{equation}
where $\mu^{(0)}$ is defined by (\ref{eq:3.7}) and $\alpha$ and $\gamma$ are the parameters in (\ref{eq:2.9}).
\end{theorem}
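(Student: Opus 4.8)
The plan is to combine the geometric structure of the penalty update rule (\ref{eq:2.9}) with the uniform bound on $\theta^{(k+1)}$ supplied by Lemma \ref{lem:mais3.3}. Throughout, write $C=\|\mu^{(0)}\|_{2}^{2}+4(f(x_{0})-f_{low})$ for the constant appearing in (\ref{eq:mais3.6}). First I would use the boundedness hypothesis to locate the iteration past which the penalty parameter can no longer grow. Indeed, whenever the ``otherwise'' branch of (\ref{eq:2.9}) is taken at some index $k\geq 1$, we have $\sigma^{(k+1)}\geq (k+1)^{\alpha}$; since $\sigma^{(k+1)}\leq\sigma_{max}$, this forces $(k+1)^{\alpha}\leq\sigma_{max}$, i.e.\ $k\leq\sigma_{max}^{1/\alpha}-1$. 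Hence every index at which the penalty increases is strictly below $\sigma_{max}^{1/\alpha}$, so for all $k\geq\sigma_{max}^{1/\alpha}$ the first branch of (\ref{eq:2.9}) is in force and $\theta^{(k+1)}\leq\gamma\theta^{(k)}$; this is the geometric-decay regime.

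Next I would pin down a clean starting value for that decay. Let $\ell$ denote the last index at which the penalty increases (take $\ell=0$ if it never does). Then $\sigma^{(k)}=\sigma^{(\ell+1)}$ for every $k\geq\ell+1$, and when $\ell\geq 1$ we have $\sigma^{(\ell+1)}\geq(\ell+1)^{\alpha}$. Applying Lemma \ref{lem:mais3.3} at index $\ell+1$ and using $\alpha>1$,
\[
[\theta^{(\ell+2)}]^{2}\leq\frac{(\ell+1)C}{\sigma^{(\ell+1)}}\leq\frac{(\ell+1)C}{(\ell+1)^{\alpha}}=(\ell+1)^{1-\alpha}C\leq C,
\]
so $\theta^{(\ell+2)}\leq\sqrt{C}$. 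The point is that the factor $\sigma^{(\ell+1)}\geq(\ell+1)^{\alpha}$ cancels the linear growth $\ell+1$ of the numerator, which is precisely why no $\sigma$-dependent quantity survives inside the logarithm of (\ref{eq:3.11}). Setting $\bar{k}=\ell+2$, the estimate on the bad indices gives $\bar{k}\leq\sigma_{max}^{1/\alpha}+1$.

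Finally, since $\theta^{(k+1)}\leq\gamma\theta^{(k)}$ for every $k\geq\bar{k}$, iterating yields $\theta^{(T)}\leq\gamma^{\,T-\bar{k}}\theta^{(\bar{k})}\leq\gamma^{\,T-\bar{k}}\sqrt{C}$ for $T\geq\bar{k}$. Taking logarithms, $\theta^{(T)}\leq\epsilon/2$ follows as soon as
\[
(T-\bar{k})\log(\gamma^{-1})\geq\frac{1}{2}\log C+\log 2+|\log\epsilon|,
\]
and substituting $\bar{k}\leq\sigma_{max}^{1/\alpha}+1$ recovers exactly the threshold (\ref{eq:3.11}), the remaining additive constant absorbing the rounding of $\bar{k}$ and the strict inequality. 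The degenerate case $\ell=0$ (penalty never increased) is handled analogously by applying Lemma \ref{lem:mais3.3} at index $1$ with $\sigma^{(1)}=\sigma^{(0)}$.

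The step I expect to be the main obstacle is the second one: one must produce a starting value of $\theta$ bounded by an \emph{absolute} constant, and the only mechanism that keeps the offending factor $\sigma_{max}^{1/\alpha}$ out of the logarithm is to evaluate Lemma \ref{lem:mais3.3} exactly one step after the last penalty increase, where the lower bound $\sigma^{(\ell+1)}\geq(\ell+1)^{\alpha}$ with $\alpha>1$ is available to dominate the growing numerator. Everything else is either the elementary counting argument on (\ref{eq:2.9}) or the routine logarithmic manipulation of a geometric sequence.
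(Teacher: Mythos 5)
Your proposal is correct and follows essentially the same route as the paper's proof: identify the last index at which the ``otherwise'' branch of (\ref{eq:2.9}) fires (bounded by $\sigma_{max}^{1/\alpha}$), invoke Lemma \ref{lem:mais3.3} one step later so that $\sigma^{(\ell+1)}\geq(\ell+1)^{\alpha}$ cancels the linear factor and yields $\theta^{(\ell+2)}\leq\bigl(\|\mu^{(0)}\|_{2}^{2}+4[f(x_{0})-f_{low}]\bigr)^{1/2}$, then close with geometric decay. You even isolate the same key mechanism the paper relies on (evaluating the lemma exactly after the last increase to keep $\sigma_{max}$ out of the logarithm), and your explicit treatment of the degenerate case where the branch never fires is, if anything, slightly more careful than the paper's.
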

\begin{proof}
Consider the set 
\begin{equation}
\mathcal{U}=\left\{j\in\mathbb{N}\,|\,\theta^{(j+1)}>\gamma\theta^{(j)}\right\}.
\label{eq:3.12}
\end{equation}
Given $k\in \mathcal{U}$, we must have $k\leq\sigma_{max}^{\frac{1}{\alpha}}$, since otherwise, by (\ref{eq:2.9}) we would have $\sigma^{(k+1)}\geq (k+1)^{\alpha}>\sigma_{max}$, contradicting our assumption on the penalty parameters. 

Let us denote $\bar{k}=\max\left\{j\,|\,j\in\mathcal{U}\right\}$. By (\ref{eq:3.11}), we have $T=\bar{k}+2+s$ with
\begin{equation}
s\geq\dfrac{\frac{1}{2}\log\left(\|\mu^{(0)}\|_{2}^{2}+4\left[f(x_{0})-f_{low}\right]\right)+\log(2)+|\log(\epsilon)|}{\log(\gamma^{-1})}.
\label{eq:eug1}
\end{equation}
Note that
\begin{equation*}
\left\{\bar{k}+2,\ldots,\bar{k}+2+s\right\}=\left\{\bar{k}+2,\ldots,T\right\}\subset\mathbb{N}-\mathcal{U}.
\end{equation*}
Consequently,
\begin{equation}
\theta^{(T)}=\theta^{(\bar{k}+2+s)}\leq\gamma^{s}\theta^{(\bar{k}+2)}.
\label{eq:3.13}
\end{equation}
Since $\bar{k}\in\mathcal{U}$, it follows that $\sigma^{(\bar{k}+1)}\geq (\bar{k}+1)^{\alpha}$. Thus, by Lemma 2.4 we have
\begin{equation*}
(\bar{k}+1)^{\alpha}\left[\theta^{(\bar{k}+2)}\right]^{2}\leq\left[\|\mu^{(0)}\|_{2}^{2}+4(f(x_{0})-f_{low})\right](\bar{k}+1)
\end{equation*}
\begin{eqnarray*}
\Longrightarrow \,\, \left[\theta^{(\bar{k}+2)}\right]^{2}&\leq & \left[\|\mu^{(0)}\|_{2}^{2}+4\left(f(x_{0})-f_{low}\right)\right] \dfrac{(\bar{k}+1)}{(\bar{k}+1)^{\alpha}}\\
&\leq & \left[\|\mu^{(0)}\|_{2}^{2}+4\left(f(x_{0})-f_{low}\right)\right] 
\end{eqnarray*}
\begin{equation}
\Longrightarrow \,\, \theta^{(\bar{k}+2)}\leq \left(\|\mu^{(0)}\|_{2}^{2}+4\left[f(x_{0})-f_{low}\right]\right)^{\frac{1}{2}}.
\label{eq:3.14}
\end{equation}
Combining (\ref{eq:3.13}) and (\ref{eq:3.14}), we obtain
\begin{equation*}
\theta^{(T)}\leq\gamma^{s}\left(\|\mu^{(0)}\|_{2}^{2}+4\left[f(x_{0})-f_{low}\right]\right)^{\frac{1}{2}}
\end{equation*}
By (\ref{eq:eug1}),
\begin{equation*}
\gamma^{s}\left(\|\mu^{(0)}\|_{2}^{2}+4\left[f(x_{0})-f_{low}\right]\right)^{\frac{1}{2}}\leq\dfrac{\epsilon}{2}.
\end{equation*}
Therefore, 
\begin{equation*}
\theta^{(T)}\leq\dfrac{\epsilon}{2},
\end{equation*}
and the proof is complete.
\end{proof}

The next theorem establishes an upper bound of $\mathcal{O}\left(\epsilon^{-\frac{2}{\alpha-1}}\right)$ iterations in case (ii).

\begin{theorem}
\label{thm:3.5}
Suppose that A1 and A2 hold, and let the sequence $\left\{(x_{k},\lambda^{(k)},\sigma^{(k)})\right\}_{k\geq 0}$ be generated by Algorithm 1 such that
\begin{equation}
\theta^{(k)}>\frac{\epsilon}{2},\quad\text{for}\,\,k=1,\ldots,T.
\label{eq:3.15}
\end{equation}
If $\lim_{k\to +\infty}\sigma^{(k)}=+\infty$, then
\begin{equation}
T< 4+\left(4\|\mu^{(0)}\|_{2}^{2}+16[f(x_{0})-f_{low}]\right)^{\frac{1}{\alpha-1}}\epsilon^{-\frac{2}{\alpha-1}}+\dfrac{\frac{1}{2}\log\left(\|\mu^{(0)}\|_{2}^{2}+4\left[f(x_{0})-f_{low}\right]\right)+\log(2)+|\log(\epsilon)|}{\log(\gamma^{-1})}
\label{eq:3.16}
\end{equation}
where $\mu^{(0)}$ is defined by (\ref{eq:3.7}) and $\alpha$ and $\gamma$ are the parameters in (\ref{eq:2.9}).
\end{theorem}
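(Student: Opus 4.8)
The plan is to reuse the machinery of Theorem \ref{thm:3.4}: with the same index set $\mathcal{U}=\{j\in\mathbb{N}\,|\,\theta^{(j+1)}>\gamma\theta^{(j)}\}$ from (\ref{eq:3.12}), every index $j\notin\mathcal{U}$ forces the geometric decrease $\theta^{(j+1)}\leq\gamma\theta^{(j)}$, while every index in $\mathcal{U}$ triggers the penalty growth $\sigma^{(j+1)}\geq(j+1)^{\alpha}$ from (\ref{eq:2.9}). Writing $C:=\|\mu^{(0)}\|_{2}^{2}+4(f(x_{0})-f_{low})$, I would again split the outer iterations into those that grow the penalty and those that shrink $\theta^{(k)}$ geometrically. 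The only structural difference with the bounded case is that $\mathcal{U}$ is now infinite, so instead of bounding $\max\mathcal{U}$ by $\sigma_{max}^{1/\alpha}$, I must bound the largest element of $\mathcal{U}$ that is still recent relative to $T$, using the standing hypothesis (\ref{eq:3.15}) in place of the a priori bound on $\sigma_{max}$.

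First I would bound $\bar{k}:=\max\left(\mathcal{U}\cap\{1,\ldots,T-2\}\right)$. For any $k\in\mathcal{U}$ with $k+2\leq T$, chaining the penalty growth $\sigma^{(k+1)}\geq(k+1)^{\alpha}$ with Lemma \ref{lem:mais3.3} applied at index $k+1$ gives $(k+1)^{\alpha}[\theta^{(k+2)}]^{2}\leq(k+1)C$, that is, $[\theta^{(k+2)}]^{2}\leq(k+1)^{1-\alpha}C$. Since $k+2\leq T$, hypothesis (\ref{eq:3.15}) yields $\theta^{(k+2)}>\epsilon/2$, whence $(k+1)^{\alpha-1}<4C/\epsilon^{2}$ and therefore $k+1<(4C)^{1/(\alpha-1)}\epsilon^{-2/(\alpha-1)}$. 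Taking $k=\bar{k}$ this is precisely the dominant term $\left(4\|\mu^{(0)}\|_{2}^{2}+16[f(x_{0})-f_{low}]\right)^{1/(\alpha-1)}\epsilon^{-2/(\alpha-1)}$ of (\ref{eq:3.16}).

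Next I would estimate the geometric tail. The same chain, together with $\alpha>1$ (so $(\bar{k}+1)^{1-\alpha}\leq 1$), gives $\theta^{(\bar{k}+2)}\leq C^{1/2}$, which is exactly (\ref{eq:3.14}). By maximality of $\bar{k}$, the indices $\bar{k}+2,\ldots,T-2$ all lie outside $\mathcal{U}$, so iterating the geometric decrease over them produces $\theta^{(T-1)}\leq\gamma^{T-\bar{k}-3}\theta^{(\bar{k}+2)}\leq\gamma^{T-\bar{k}-3}C^{1/2}$. Since $T-1\leq T$, hypothesis (\ref{eq:3.15}) gives $\theta^{(T-1)}>\epsilon/2$; taking logarithms and using $\log(\gamma^{-1})>0$ then yields $T-\bar{k}-3<s_{\max}$, where $s_{\max}:=\left[\tfrac{1}{2}\log C+\log 2+|\log\epsilon|\right]/\log(\gamma^{-1})$ is the last term of (\ref{eq:3.16}). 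Substituting the bound on $\bar{k}$ from the previous step gives $T<\bar{k}+3+s_{\max}<(4C)^{1/(\alpha-1)}\epsilon^{-2/(\alpha-1)}+2+s_{\max}$, which is dominated by the right-hand side of (\ref{eq:3.16}).

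The delicate part is the bookkeeping at the right endpoint rather than any single estimate. Because $\mathcal{U}$ is now infinite, the indices $T-1$ and $T$ may themselves belong to $\mathcal{U}$, so the geometric chain cannot be guaranteed to reach $\theta^{(T)}$; this is exactly why I would anchor the tail at $\theta^{(T-1)}$ and work with $\mathcal{U}\cap\{1,\ldots,T-2\}$, and it is what accounts for the constant $4$ in (\ref{eq:3.16}) in place of the $2$ appearing in Theorem \ref{thm:3.4}. The only remaining case, $\mathcal{U}\cap\{1,\ldots,T-2\}=\emptyset$, forces $T$ to be so small that (\ref{eq:3.16}) holds trivially, so it requires only a one-line remark.
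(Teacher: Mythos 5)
Your proposal is correct and follows essentially the same route as the paper's proof: the same set $\mathcal{U}$, the same use of Lemma \ref{lem:mais3.3} at index $k+1$ to bound the largest element of $\mathcal{U}\cap\{1,\ldots,T-2\}$ and to get $\theta^{(\bar{k}+2)}\leq C^{1/2}$, and the same geometric-decay argument on the complementary indices. The only difference is cosmetic bookkeeping at the right endpoint (you anchor the contradiction at $\theta^{(T-1)}$ where the paper uses $\theta^{(T-2)}$), which changes nothing of substance.
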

\begin{proof}
Again, consider the set $\mathcal{U}$ defined in (\ref{eq:3.12}). Given $k\in\mathcal{U}$, it follows from (\ref{eq:2.9}) and Lemma \ref{lem:mais3.3} that
\begin{equation*}
(k+1)\left(\|\mu^{(0)}\|_{2}^{2}+4[f(x_{0})-f_{low}]\right)\geq (k+1)^{\alpha}\left[\theta^{(k+2)}\right]^{2}
\end{equation*}
\begin{equation*}
\Longrightarrow \left[\theta^{(k+2)}\right]^{2}\leq\left(\|\mu^{(0)}\|_{2}^{2}+4[f(x_{0})-f_{low}]\right)\dfrac{1}{(k+1)^{\alpha-1}}.
\end{equation*}
Thus, 
\begin{equation*}
k\in\mathcal{U}\,\,\text{and}\,\,k+1\geq\left(4\|\mu^{(0)}\|_{2}^{2}+16[f(x_{0})-f_{low}]\right)^{\frac{1}{\alpha-1}}\epsilon^{-\frac{2}{\alpha-1}}\Longrightarrow \theta^{(k+2)}\leq\dfrac{\epsilon}{2}.
\end{equation*}
By (\ref{eq:3.15}), this means that
\begin{equation*}
\tilde{k}=\max\left\{k\in\left\{1,\ldots,T-2\right\}\,|\,k\in\mathcal{U}\right\}\leq \left(4\|\mu^{(0)}\|_{2}^{2}+16[f(x_{0})-f_{low}]\right)^{\frac{1}{\alpha-1}}\epsilon^{-\frac{2}{\alpha-1}}.
\end{equation*}
Suppose that $T-2>\tilde{k}+2$ and define $s=(T-2)-(\tilde{k}+2)$. Then,
\begin{equation*}
\left\{\tilde{k}+2,\ldots,\tilde{k}+2+s\right\}=\left\{\tilde{k}+2,\ldots,T-2\right\}\subset\left\{1,\ldots,T-2\right\}-\mathcal{U},
\end{equation*}
and so
\begin{equation}
\theta^{(T-2)}=\theta^{(\tilde{k}+2+s)}\leq\gamma^{s}\theta^{(\tilde{k}+2)}.
\label{eq:3.17}
\end{equation}
Since $\tilde{k}\in\mathcal{U}$, it follows that $\sigma^{(\tilde{k}+1)}\geq (\tilde{k}+1)^{\alpha}$. Thus, using Lemma \ref{lem:mais3.3} as in the proof of Theorem \ref{thm:3.4}, we obtain 
\begin{equation}
\theta^{(\tilde{k}+2)}\leq \left(\|\mu^{(0)}\|_{2}^{2}+4\left[f(x_{0})-f_{low}\right]\right)^{\frac{1}{2}}.
\label{eq:3.18}
\end{equation}
Combining (\ref{eq:3.17}) and (\ref{eq:3.18}), we obtain
\begin{equation*}
\theta^{(T-2)}\leq \gamma^{s}\left(\|\mu^{(0)}\|_{2}^{2}+4\left[f(x_{0})-f_{low}\right]\right)^{\frac{1}{2}}.
\end{equation*}
Note that, if 
\begin{equation*}
s\geq\dfrac{\frac{1}{2}\log\left(\|\mu^{(0)}\|_{2}^{2}+4\left[f(x_{0})-f_{low}\right]\right)+\log(2)+|\log(\epsilon)|}{\log(\gamma^{-1})},
\end{equation*}
we would have
\begin{equation*}
\theta^{(T-2)}\leq \dfrac{\epsilon}{2},
\end{equation*}
contradicting (\ref{eq:3.15}). 
Hence, we must have
\begin{equation*}
s<\dfrac{\frac{1}{2}\log\left(\|\mu^{(0)}\|_{2}^{2}+4\left[f(x_{0})-f_{low}\right]\right)+|\log(\epsilon)|}{\log(\gamma^{-1})}.
\end{equation*}
Therefore,
\begin{eqnarray*}
T-2=\tilde{k}+2+s&<&\left(4\|\mu^{(0)}\|_{2}^{2}+16[f(x_{0})-f_{low}]\right)^{\frac{1}{\alpha-1}}\epsilon^{-\frac{2}{\alpha-1}}+2\\
& &+\dfrac{\frac{1}{2}\log\left(\|\mu^{(0)}\|_{2}^{2}+4\left[f(x_{0})-f_{low}\right]\right)+\log(2)+|\log(\epsilon)|}{\log(\gamma^{-1})},
\end{eqnarray*}
which gives (\ref{eq:3.16}).
\end{proof}

In summary, Theorem \ref{thm:3.4} means that if the sequence of penalty parameters is bounded from above, then Algorithm 1 takes at most $\mathcal{O}(|\log(\epsilon)|)$ outer iterations to generate $(x_{k},\lambda^{(k)})$ satisfying (\ref{eq:3.1})-(\ref{eq:mais3.3}), that is, an $\epsilon$-approximate KKT point of (\ref{eq:1.1})-(\ref{eq:1.3}). On the other hand, if the sequence of penalty parameters is unbounded, Theorem \ref{thm:3.5} gives an upper bound of $\mathcal{O}(\epsilon^{-\frac{2}{\alpha-1}})$ outer iterations, where $\alpha>1$ defines the increase of the penalty parameter. As expected (see \cite{BER}, p. 104), the bigger $\alpha$ is, the more aggressive the update of the penalty parameter is, and consequently, the maximum number of outer iterations needed for Algorithm 1 to find and $\epsilon$-approximate KKT point is smaller.

\section{Evaluation Complexity for Linearly Constrained Problems}

At each outer iteration of Algorithm 1, subproblem (\ref{eq:2.6}) must be approximately solved. In general, this is done by applying an iterative optimization method for unconstrained problems. When its invoked, this unconstrained method requires a certain number of calls of the oracle\footnote{By \textit{calls of the oracle} we mean the joint computation of $f$, $c_{i}$ ($i=1,\ldots,m$) and their derivatives.}, which is proportional to the number of inner iterations. Therefore, a full estimation of the worst-case complexity of Algorithm 1 must also take into account the iteration complexiy of the auxiliary unconstrained method used to solve the subproblems. With this goal in mind, now we shall consider the following special case of (\ref{eq:1.1})-(\ref{eq:1.3}):
\begin{align}
\min_{x\in\mathbb{R}^{n}}&\quad f(x),\label{eq:extra4.1}\\
\text{s.t.}              &\quad a_{i}^{T}x-b_{i}=0,\,\,i=1,\ldots,m_{e} \label{eq:extra4.2}\\
                         &\quad a_{i}^{T}x-b_{i}\geq 0,\,\,i=m_{e}+1,\ldots,m,\label{eq:extra4.3},
\end{align}
where $f:\mathbb{R}^{n}\to\mathbb{R}$ is continuously differentiable, possibly nonconvex, $a_{i}\in\mathbb{R}^{n}$ and $b_{i}\in\mathbb{R}$ for $i=1,\ldots,m$. Given $\lambda\in\mathbb{R}^{m}$, with $\lambda_{i}\geq 0$ for $i=m_{e}+1,\ldots,m$, and $\sigma>0$, the corresponding augmented Lagrangian function to (\ref{eq:extra4.1})-(\ref{eq:extra4.3}) is 
\begin{eqnarray}
P(x,\lambda,\sigma)&=&f(x)+\sum_{i=1}^{m_{e}}\left[-\lambda_{i}(a_{i}^{T}x-b_{i})+\frac{1}{2}\sigma (a_{i}^{T}x-b_{i})^{2}\right]\nonumber\\
& &+\sum_{i=m_{e}+1}^{m}\left\{\begin{array}{ll} \left[-\lambda_{i}(a_{i}^{T}x-b_{i})+\frac{1}{2}\sigma (a_{i}^{T}x-b_{i})^{2}\right],&\text{if}\,\,a_{i}^{T}x-b_{i}<\frac{\lambda_{i}}{\sigma},\\
-\frac{1}{2}\lambda_{i}^{2}/\sigma,&\text{otherwise}.
                        \end{array}
                        \right.
\label{eq:extra4.4}
\end{eqnarray}
For each $i\in\left\{1,\ldots,n\right\}$, let us denote by $a_{ij}$ the $j$-th component of vector $a_{i}$. Specifically, let us consider the following additional assumption on $f$:
\begin{itemize}
\item[\textbf{A3}] The gradient of $f(\,.\,)$, $\nabla f:\mathbb{R}^{n}\to\mathbb{R}^{n}$, is $L_{1}$-Lipschitz continuous, i.e., 
\begin{equation*}
\|\nabla f(x)-\nabla f(y)\|\leq L_{1}\|x-y\|,\quad\forall x,y\in\mathbb{R}^{n}.
\end{equation*}
\end{itemize}

\begin{lemma}
Given $\lambda\in\mathbb{R}^{n}$, with $\lambda_{i}\geq 0$ for $i=m_{e}+1,\ldots,m$, and $\sigma>0$, let function $P(\,.\,,\lambda,\sigma)$ be defined by (\ref{eq:extra4.4}). Suppose that A3 holds and that
\begin{equation}
a_{ij}\geq 0,\quad i=m_{e}+1,\ldots,m,\,\,j=1,\ldots,n.
\label{eq:extra4.5}
\end{equation}
Then,
\begin{equation}
\|\nabla P(x,\lambda,\sigma)-\nabla P(y,\lambda,\sigma)\|_{2}\leq\sqrt{n}\left(L_{1}+\sigma\|A\|_{F}^{2}\right)\|x-y\|_{2},\quad\forall x,y\in\mathbb{R}^{n},
\label{eq:extra4.6}
\end{equation}
where the $i$-th row of $A\in\mathbb{R}^{m\times n}$ is vector $a_{i}^{T}$ and $\|\,.\,\|_{F}$ denotes the Frobenius norm.
\label{lem:extra3.1}
\end{lemma}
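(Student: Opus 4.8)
The plan is to compute the gradient of $P(\,.\,,\lambda,\sigma)$ explicitly, split it into the objective part and the constraint part, and bound the Lipschitz modulus of each piece separately. Differentiating (\ref{eq:extra4.4}) with respect to $x$ yields
\begin{equation*}
\nabla P(x,\lambda,\sigma)=\nabla f(x)+\sum_{i=1}^{m_{e}}\left[-\lambda_{i}+\sigma(a_{i}^{T}x-b_{i})\right]a_{i}+\sum_{i=m_{e}+1}^{m}\phi_{i}(x)\,a_{i},
\end{equation*}
where $\phi_{i}(x)=-\lambda_{i}+\sigma(a_{i}^{T}x-b_{i})$ when $a_{i}^{T}x-b_{i}<\lambda_{i}/\sigma$ and $\phi_{i}(x)=0$ otherwise. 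First I would handle the objective term, for which A3 gives the modulus $L_{1}$ directly. The main work is the constraint term.

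For the equality block the map $x\mapsto\left[-\lambda_{i}+\sigma(a_{i}^{T}x-b_{i})\right]a_{i}$ is affine with Jacobian $\sigma\,a_{i}a_{i}^{T}$, so the whole equality contribution has Jacobian $\sigma\sum_{i=1}^{m_{e}}a_{i}a_{i}^{T}$, whose norm is controlled by $\sigma\|A\|_{F}^{2}$. For the inequality block I would argue that each scalar function $\psi_{i}(x)=\min\left\{0,-\lambda_{i}+\sigma(a_{i}^{T}x-b_{i})\right\}=\sigma\,\bigl(a_{i}^{T}x-b_{i}-\lambda_{i}/\sigma\bigr)_{-}$ is Lipschitz with modulus $\sigma\|a_{i}\|_{2}$, since $t\mapsto(t)_{-}$ is $1$-Lipschitz and $x\mapsto a_{i}^{T}x$ has modulus $\|a_{i}\|_{2}$. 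The function $\phi_{i}$ differs from $\psi_{i}$ only by a constant sign inside the truncation, so it inherits the same modulus; the hypothesis (\ref{eq:extra4.5}) that $a_{ij}\geq 0$ ensures the vector $\phi_{i}(x)a_{i}$ behaves well componentwise when I sum the contributions. I expect the cleanest route is to estimate $\|\nabla P(x,\lambda,\sigma)-\nabla P(y,\lambda,\sigma)\|_{2}$ by bounding each of the $n$ components and then using the relation between componentwise bounds and the Euclidean norm, which is where the factor $\sqrt{n}$ enters.

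Concretely, I would write the $j$-th component of the difference, collect the objective contribution (bounded by $L_{1}\|x-y\|_{2}$ via A3) and the constraint contributions (each $\phi_{i}$ or affine term contributing a factor proportional to $\sigma|a_{ij}|\,\|a_{i}\|_{2}\|x-y\|_{2}$), then sum over $i$ and over $j$. Invoking $\sum_{i,j}a_{ij}^{2}=\|A\|_{F}^{2}$ and a Cauchy–Schwarz step to pass from the $\ell_{1}$-type sum over constraints to the Frobenius norm produces the factor $\sigma\|A\|_{F}^{2}$, and aggregating the $n$ componentwise estimates contributes the $\sqrt{n}$. The delicate point — the main obstacle — is the inequality block, because $\phi_{i}$ is only piecewise smooth (nondifferentiable where $a_{i}^{T}x-b_{i}=\lambda_{i}/\sigma$); I would avoid differentiating through the kink by using the global Lipschitz property of the truncation $(\cdot)_{-}$ rather than a pointwise Hessian argument, and I would use (\ref{eq:extra4.5}) to keep signs consistent so that the truncated and non-truncated terms can be bounded uniformly. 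Assembling these pieces gives (\ref{eq:extra4.6}).
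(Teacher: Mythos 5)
Your proposal is correct, and it follows the same overall skeleton as the paper's proof: compute $\partial P/\partial x_{j}$ explicitly, bound each component of the gradient difference by $\left(L_{1}+\sigma\|A\|_{F}^{2}\right)\|x-y\|_{2}$, and pick up the factor $\sqrt{n}$ from $\|\cdot\|_{2}\leq\sqrt{n}\|\cdot\|_{\infty}$. Where you genuinely diverge is the treatment of the piecewise inequality block, which is the only delicate step. The paper bounds the term it calls $\beta_{i}(x,y)$ by a three-case analysis according to which of $a_{i}^{T}x-b_{i}$ and $a_{i}^{T}y-b_{i}$ lies below $\lambda_{i}/\sigma$, and it is precisely in the two mixed cases that the sign hypothesis (\ref{eq:extra4.5}) is invoked to force $\beta_{i}(x,y)\leq 0$. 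You instead observe that the coefficient multiplying $a_{i}$ in the gradient is exactly $\phi_{i}(x)=\sigma\left(a_{i}^{T}x-b_{i}-\lambda_{i}/\sigma\right)_{-}$ (in fact $\phi_{i}$ and your $\psi_{i}$ coincide identically, not just up to a sign), and since $(\cdot)_{-}=\min\{0,\cdot\}$ is globally $1$-Lipschitz you obtain $|\phi_{i}(x)-\phi_{i}(y)|\leq\sigma\|a_{i}\|_{2}\|x-y\|_{2}$ in one line; this is the paper's estimate (\ref{eq:extra4.8}) strengthened to an absolute-value bound. A consequence you did not remark on is that your route makes the hypothesis (\ref{eq:extra4.5}) superfluous: nothing in the $1$-Lipschitz argument for the truncation uses the sign of $a_{ij}$, so you do not actually need (\ref{eq:extra4.5}) ``to keep signs consistent,'' and the conclusion (\ref{eq:extra4.6}) holds for arbitrary $a_{i}$, whereas the paper needs the sign assumption (and hence the reduction to the standard form discussed in the subsequent remark) only because of its case analysis. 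The remaining bookkeeping --- $\sum_{i}\|a_{i}\|_{2}|a_{ij}|\leq\|A\|_{F}^{2}$ via $|a_{ij}|\leq\|a_{i}\|_{2}$, and the final aggregation over the $n$ components --- matches the paper, so your ``Cauchy--Schwarz step'' is if anything more machinery than is required there.
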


\begin{proof}
Indeed, from (\ref{eq:extra4.4}), a direct calculation shows that
\begin{eqnarray*}
\dfrac{\partial P}{\partial x_{j}}(x,\lambda,\sigma)&=&\dfrac{\partial f}{\partial x_{j}}(x)+\sum_{i=1}^{m_{e}}\left[-\lambda_{i}a_{ij}+\sigma(a_{i}^{T}x-b_{i})a_{ij}\right]\\
& &+\sum_{i=m_{e}+1}^{m}\left\{\begin{array}{ll} \left[-\lambda_{i}a_{ij}+\sigma(a_{i}^{T}x-b_{i})a_{ij}\right],&\text{if}\,\,a_{i}^{T}x-b_{i}<\frac{\lambda_{i}}{\sigma},\\ 0,&\text{otherwise}.
                        \end{array}
                        \right.
\end{eqnarray*}
Thus, given $x,y\in\mathbb{R}^{n}$, we have
\begin{equation}
\dfrac{\partial P}{\partial x_{j}}(x,\lambda,\sigma)-\dfrac{\partial P}{\partial x_{j}}(y,\lambda,\sigma)=\dfrac{\partial f}{\partial x_{j}}(x)-\dfrac{\partial f}{\partial x_{j}}(y)+\sum_{i=1}^{m_{e}}\sigma [a_{i}^{T}(x-y)]a_{ij}+\sum_{i=m_{e}+1}^{m}\beta_{i}(x,y),
\label{eq:extra4.7}
\end{equation}
where
\begin{equation*}
\beta_{i}(x,y)=\left\{\begin{array}{ll} \sigma [a_{i}^{T}(x-y)]a_{ij},&\text{if}\,\,\max\left\{a_{i}^{T}x-b_{i},a_{i}^{T}y-b_{i}\right\}<\frac{\lambda_{i}}{\sigma},\\
                                    &                     \\
-\lambda_{i}a_{ij}+\sigma(a_{i}^{T}x-b_{i})a_{ij},&\text{if}\,\,a_{i}^{T}x-b_{i}<\frac{\lambda_{i}}{\sigma}\leq a_{i}^{T}y-b_{i},\\
&                \\
\lambda_{i}a_{ij}-\sigma(a_{i}^{T}y-b_{i})a_{ij},&\text{if}\,\,a_{i}^{T}y-b_{i}<\frac{\lambda_{i}}{\sigma}\leq a_{i}^{T}x-b_{i},
                      \end{array}
                \right.
\end{equation*}
for $i=m_{e}+1,\ldots,m$. If $\max\left\{a_{i}^{T}x-b_{i},a_{i}^{T}y-b_{i}\right\}<\frac{\lambda_{i}}{\sigma}$, then
\begin{equation*}
\beta_{i}(x,y)=\sigma [a_{i}^{T}(x-y)]a_{ij}\leq\sigma \|a_{i}\|_{2}\|x-y\|_{2}|a_{ij}|.
\end{equation*}
If $a_{i}^{T}x-b_{i}<\frac{\lambda_{i}}{\sigma}\leq a_{i}^{T}y-b_{i}$, then (\ref{eq:extra4.5}) implies that
\begin{equation*}
\sigma(a_{i}^{T}x-b_{i})a_{ij}\leq\lambda_{i}a_{ij},
\end{equation*} 
and so
\begin{equation*}
\beta_{i}(x,y)=-\lambda_{i}a_{ij}+\sigma(a_{i}^{T}x-b_{i})a_{ij}\leq 0\leq \sigma\|a_{i}\|_{2}\|x-y\|_{2}|a_{ij}|.
\end{equation*}
Finally, if $a_{i}^{T}y-b_{i}<\frac{\lambda_{i}}{\sigma}\leq a_{i}^{T}x-b_{i}$, then (\ref{eq:extra4.5}) implies that
\begin{equation*}
\lambda a_{ij}\leq\sigma (a_{i}^{T}x-b_{i})a_{ij},
\end{equation*}
and so
\begin{eqnarray*}
\beta_{i}(x,y)&=&\lambda_{i}a_{ij}-\sigma (a_{i}^{T}y-b_{i})a_{ij}\leq\sigma (a_{i}^{T}x-b_{i})a_{ij}-\sigma (a_{i}^{T}y-b_{i})a_{ij}\\
              &= & \sigma [a_{i}^{T}(x-y)]a_{ij}\\
              &\leq & \sigma\|a_{i}\|_{2}\|x-y\|_{2}|a_{ij}|.
\end{eqnarray*}
In summary,
\begin{equation}
\beta_{i}(x,y)\leq\sigma\|a_{i}\|_{2}\|x-y\|_{2}|a_{ij}|,\quad i=m_{e}+1,\ldots,m.
\label{eq:extra4.8}
\end{equation}
Combining (\ref{eq:extra4.7}) and (\ref{eq:extra4.8}), we obtain
\begin{equation*}
\dfrac{\partial P}{\partial x_{j}}(x,\lambda,\sigma)-\dfrac{\partial P}{\partial x_{j}}(y,\lambda,\sigma)\leq\left|\dfrac{\partial f}{\partial x_{j}}(x)-\dfrac{\partial f}{\partial x_{j}}(y)\right|+\sum_{i=1}^{m}\sigma\|a_{i}\|_{2}\|x-y\|_{2}|a_{ij}|.
\end{equation*}
Similarly, we also have
\begin{equation*}
\dfrac{\partial P}{\partial x_{j}}(y,\lambda,\sigma)-\dfrac{\partial P}{\partial x_{j}}(x,\lambda,\sigma)\leq\left|\dfrac{\partial f}{\partial x_{j}}(y)-\dfrac{\partial f}{\partial x_{j}}(x)\right|+\sum_{i=1}^{m}\sigma\|a_{i}\|_{2}\|y-x\|_{2}|a_{ij}|.
\end{equation*}
Therefore,
\begin{eqnarray}
\left|\dfrac{\partial P}{\partial x_{j}}(x,\lambda,\sigma)-\dfrac{\partial P}{\partial x_{j}}(y,\lambda,\sigma)\right|&\leq&\left|\dfrac{\partial f}{\partial x_{j}}(x)-\dfrac{\partial f}{\partial x_{j}}(y)\right|+\sum_{i=1}^{m}\sigma\|a_{i}\|_{2}\|x-y\|_{2}|a_{ij}|\nonumber\\
&\leq & \left|\dfrac{\partial f}{\partial x_{j}}(x)-\dfrac{\partial f}{\partial x_{j}}(y)\right|+\sigma\|A\|_{F}^{2}\|x-y\|_{2}.
\label{eq:extra4.9}
\end{eqnarray}
From (\ref{eq:extra4.9}) and A3, we obtain
\begin{eqnarray*}
\|\nabla P(x,\lambda,\sigma)-\nabla P(y,\lambda,\sigma)\|_{2}&\leq &\sqrt{n}\|\nabla P(x,\lambda,\sigma)-\nabla P(y,\lambda,\sigma)\|_{\infty}\\
&\leq &\sqrt{n}\left(\|\nabla f(x)-\nabla f(y)\|_{\infty}+\sigma\|A\|_{F}^{2}\|x-y\|_{2}\right)\\
&\leq &\sqrt{n}\left(\|\nabla f(x)-\nabla f(y)\|_{2}+\sigma\|A\|_{F}^{2}\|x-y\|_{2}\right)\\
&\leq &\sqrt{n}\left(L_{1}+\sigma\|A\|_{F}^{2}\right)\|x-y\|_{2},
\end{eqnarray*}
that is, (\ref{eq:extra4.6}) holds.
\end{proof}

\begin{remark}
Recall that any linearly constrained problem can be formulated in the standard form
\begin{align}
\min_{x\in\mathbb{R}^{n}}&\quad f(x),\label{eq:extra4.10}\\
\text{s.t.}              &\quad a_{i}^{T}x-b_{i}=0,\,\,i=1,\ldots,m_{e} \label{eq:extra4.11}\\
                         &\quad x_{j}\geq 0,\,\,j=1,\ldots,n,\label{eq:extra4.12}
\end{align}
for which assumption (\ref{eq:extra4.5}) holds.
\end{remark}

From Lemma \ref{lem:extra3.1} it follows that $\nabla P(\,.\,,\lambda,\sigma)$ is Lipschitz continuous when $\nabla f(\,.\,)$ is Lipschitz continuous. In this case, one can minimize $P(\,.\,,\lambda,\sigma)$ by using a first-order method $\mathcal{M}_{1}$. More specifically, let us consider the following assumption on $\mathcal{M}_{1}$:
\begin{itemize}
\item[\textbf{A4}] Given any differentiable function $g:\mathbb{R}^{n}\to\mathbb{R}$, bounded below by $g_{low}$, with $L$-Lipschitz gradient, method $\mathcal{M}_{1}$ with starting point $\tilde{x}$, can find an $\epsilon$-approximate stationary point of $g(\,.\,)$ in at most
\begin{equation*}
C_{\mathcal{M}_{1}}L(g(\tilde{x})-g_{low})\epsilon^{-2}
\end{equation*}
iterations, where $C_{\mathcal{M}_{1}}$ is a positive constant that depends only on the method $\mathcal{M}_{1}$.
\end{itemize}
Examples of methods that satisfy A4 can be found in \cite{NES}, \cite{GST}, \cite{CGT} and \cite{GYY}. The next lemma establishes an upper bound on the number of iterations for these methods to compute approximate solutions of (\ref{eq:2.6}) satisfying (\ref{eq:2.7})-(\ref{eq:2.8}).

\begin{lemma}
Suppose that A1 and A3 hold, and let $\left\{(x_{j},\lambda^{(j)},\sigma^{(j)})\right\}_{j=0}^{k}$ be generated by Algorithm 1 applied to (\ref{eq:extra4.1})-(\ref{eq:extra4.3}), with $\left\{a_{i}\right\}_{i=m_{e}+1}^{m}$ satisfying (\ref{eq:extra4.5}). Moreover, assume that a monotone method $\mathcal{M}_{1}$ is applied to minimize $P(\,.\,,\lambda^{(k)},\sigma^{(k)})$ with starting point
\begin{equation}
\tilde{x}_{k,0}=\arg\min\left\{P(y,\lambda^{(k)},\sigma^{(k)})\,|\,y\in\left\{x_{0},x_{k}\right\}\right\}.
\label{eq:extra4.13}
\end{equation}
If method $\mathcal{M}_{1}$ satisfies A4, then $\mathcal{M}_{1}$ takes at most $\mathcal{O}\left(\max\left\{1,\sigma^{(k)}\right\}(k+1)\sqrt{n}\epsilon^{-2}\right)$ iterations to generate $\tilde{x}_{k,\ell}$ such that
\begin{equation*}
\|\nabla_{x}P(\tilde{x}_{k,\ell},\lambda^{(k)},\sigma^{(k)})\|_{2}\leq\epsilon.
\end{equation*}
\label{lem:extra3.2}
\end{lemma}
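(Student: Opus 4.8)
The plan is to invoke assumption \textbf{A4} with the objective $g(\,.\,)=P(\,.\,,\lambda^{(k)},\sigma^{(k)})$ and starting point $\tilde{x}=\tilde{x}_{k,0}$. To apply \textbf{A4} I need three ingredients: a Lipschitz constant $L$ for $\nabla_{x}P(\,.\,,\lambda^{(k)},\sigma^{(k)})$, a lower bound $g_{low}$ for this function, and a bound on the initial optimality gap $g(\tilde{x}_{k,0})-g_{low}$. Once these are in hand, \textbf{A4} guarantees that $\mathcal{M}_{1}$ reaches a point with $\|\nabla_{x}P\|_{2}\leq\epsilon$ in at most $C_{\mathcal{M}_{1}}L(g(\tilde{x}_{k,0})-g_{low})\epsilon^{-2}$ iterations, so the whole proof reduces to showing that $L(g(\tilde{x}_{k,0})-g_{low})=\mathcal{O}\left(\max\{1,\sigma^{(k)}\}(k+1)\sqrt{n}\right)$.

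First I would read off the Lipschitz constant directly from Lemma \ref{lem:extra3.1}: since \textbf{A3} and (\ref{eq:extra4.5}) hold, $\nabla_{x}P(\,.\,,\lambda^{(k)},\sigma^{(k)})$ is Lipschitz with constant $L=\sqrt{n}\left(L_{1}+\sigma^{(k)}\|A\|_{F}^{2}\right)$, which is $\mathcal{O}\left(\sqrt{n}\max\{1,\sigma^{(k)}\}\right)$ once the fixed problem data $L_{1}$ and $\|A\|_{F}$ are absorbed into the constant.

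Next I would bound the optimality gap. For the lower bound, Lemma \ref{lem:plus2.2} together with \textbf{A1} gives $P(x,\lambda^{(k)},\sigma^{(k)})\geq f_{low}-\tfrac{1}{2}\sum_{i=1}^{m}[\lambda_{i}^{(k)}]^{2}/\sigma^{(k)}=f_{low}-\tfrac{1}{2}\|\mu^{(k)}\|_{2}^{2}$, so I may take $g_{low}=f_{low}-\tfrac{1}{2}\|\mu^{(k)}\|_{2}^{2}$. For the initial value, the choice (\ref{eq:extra4.13}) yields $P(\tilde{x}_{k,0},\lambda^{(k)},\sigma^{(k)})\leq P(x_{0},\lambda^{(k)},\sigma^{(k)})$, and since $x_{0}$ is feasible Lemma \ref{lem:3.1} gives $P(x_{0},\lambda^{(k)},\sigma^{(k)})\leq f(x_{0})$. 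Combining these, the gap is bounded by $f(x_{0})-f_{low}+\tfrac{1}{2}\|\mu^{(k)}\|_{2}^{2}$.

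The crucial step, and the place where the factor $(k+1)$ enters, is controlling $\|\mu^{(k)}\|_{2}^{2}$, which may grow with $k$. Here I would invoke the already-established bound (\ref{eq:3.9}), namely $\|\mu^{(k)}\|_{2}^{2}\leq\|\mu^{(0)}\|_{2}^{2}+2(f(x_{0})-f_{low})k$, so the gap is at most $\tfrac{1}{2}\|\mu^{(0)}\|_{2}^{2}+(f(x_{0})-f_{low})(k+1)=\mathcal{O}(k+1)$. Multiplying this by the Lipschitz constant and the factor $\epsilon^{-2}$ from \textbf{A4} gives the claimed count $\mathcal{O}\left(\max\{1,\sigma^{(k)}\}(k+1)\sqrt{n}\epsilon^{-2}\right)$. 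The main obstacle is precisely the $k$-dependence hidden in the multipliers, which (\ref{eq:3.9}) resolves; I note in passing that the monotonicity of $\mathcal{M}_{1}$ together with the starting-point rule (\ref{eq:extra4.13}) also ensures en route that the descent condition (\ref{eq:2.7}) is respected, so the resulting iterate is legitimate for Algorithm 1.
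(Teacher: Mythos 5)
Your proposal is correct and follows essentially the same route as the paper's proof: invoke A4 with the Lipschitz constant from Lemma \ref{lem:extra3.1}, the lower bound from Lemma \ref{lem:plus2.2}, the initial-value bound from (\ref{eq:extra4.13}) and Lemma \ref{lem:3.1}, and control the multiplier term via (\ref{eq:3.9}) to extract the $(k+1)$ factor. The only cosmetic difference is that the paper writes the Lipschitz constant as $\max\{1,\sigma^{(k)}\}\sqrt{n}(L_{1}+\|A\|_{F}^{2})$ while you keep $\sqrt{n}(L_{1}+\sigma^{(k)}\|A\|_{F}^{2})$, which is equivalent up to constants.
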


\begin{proof}
By Lemma \ref{lem:plus2.2}, A1 and (\ref{eq:3.9}), for all $x\in\mathbb{R}^{n}$ we have
\begin{eqnarray}
P(x,\lambda^{(k)},\sigma^{(k)})&\geq & f(x)-\dfrac{1}{2}\sum_{i=1}^{m}\dfrac{[\lambda_{i}^{(k)}]^{2}}{\sigma^{(k)}}\nonumber\\
& = & f(x)-\dfrac{1}{2}\|\mu^{(k)}\|_{2}^{2}\nonumber\\
&\geq & f_{low}-\dfrac{1}{2}\|\mu^{(0)}\|_{2}^{2}-(f(x_{0})-f_{low})k\equiv P_{low}.
\label{eq:extra4.14}
\end{eqnarray}
On the other hand, it follows from (\ref{eq:extra4.13}) and Lemma \ref{lem:3.1} that 
\begin{equation}
P(\tilde{x}_{k,0},\lambda^{(k)},\sigma^{(k)})\leq P(x_{0},\lambda^{(k)},\sigma^{(k)})\leq f(x_{0}).
\label{eq:extra4.15}
\end{equation}
Finally, from A3 and Lemma \ref{lem:extra3.1}, it follows that $\nabla P(\,.\,,\lambda^{(k)},\sigma^{(k)})$ is Lipschitz continuous with constant
\begin{equation*}
L=\max\left\{1,\sigma^{(k)}\right\}\sqrt{n}\left(L_{1}+\|A\|_{F}^{2}\right).
\end{equation*}
Thus, by A4, (\ref{eq:extra4.14}) and (\ref{eq:extra4.15}), we conclude that method $\mathcal{M}_{1}$ takes at most
\begin{equation*}
C_{\mathcal{M}_{1}}\max\left\{1,\sigma^{(k)}\right\}\sqrt{n}\left(L_{1}+\|A\|_{F}^{2}\right)\left(f(x_{0})-f_{low}+\frac{1}{2}\|\mu^{(0)}\|_{2}^{2}\right)(k+1)\epsilon^{-2}
\end{equation*}
iterations to generate an $\epsilon$-approximate stationary point of $P(\,.\,,\lambda^{(k)},\sigma^{(k)})$.
\end{proof}

Now, combining Theorems \ref{thm:3.4} and \ref{thm:3.5} with Lemma \ref{lem:extra3.2} we can obtain worst-case complexity bounds for the total number of inner iterations performed in Algorithm 1 to find an $\epsilon$-approximate KKT point of (\ref{eq:extra4.1})-(\ref{eq:extra4.3}).

\begin{theorem}
Suppose that Algorithm 1 is applied to solve (\ref{eq:extra4.1})-(\ref{eq:extra4.3}), with $f$ satisfying A1 and A3, and $\left\{a_{i}\right\}_{m_{e}+1}^{m}$ satisfying (\ref{eq:extra4.5}). Moreover, assume that at each outer iteration of Algorithm 1, a monotone method $\mathcal{M}_{1}$ satisfying A4 is used to approximately solve (\ref{eq:2.6}) with starting point $\tilde{x}_{k,0}$ given in (\ref{eq:extra4.13}). Then, the following statements are true:
\begin{itemize}
\item[(a)] If $\left\{\sigma^{(k)}\right\}_{k\geq 0}$ is bounded from above by $\sigma_{max}$, then Algorithm 1 takes at most $\mathcal{O}\left(\sigma_{max}\sqrt{n}|\log(\epsilon)|^{2}\epsilon^{-2}\right)$ inner iterations of $\mathcal{M}_{1}$ to generate and $\epsilon$-approximate KKT point of (\ref{eq:extra4.1})-(\ref{eq:extra4.3}).
\item[(b)] If $\lim_{k\to +\infty}\sigma^{(k)}=+\infty$ and $\alpha\geq 2$ is integer, then Algorithm 1 takes at most $\mathcal{O}\left(\sqrt{n}\epsilon^{-\left(\frac{2(2+\alpha)}{\alpha-1}+2\right)}\right)$ iterations of $\mathcal{M}_{1}$ to generate an $\epsilon$-approximate KKT point of (\ref{eq:extra4.1})-(\ref{eq:extra4.3}).
\end{itemize}
\label{thm:extra3.1}
\end{theorem}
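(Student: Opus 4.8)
The plan is to turn the outer-iteration counts of Theorems \ref{thm:3.4} and \ref{thm:3.5} into total inner-iteration counts by summing the per-subproblem bound of Lemma \ref{lem:extra3.2} over all outer iterations. First I would recall, from the discussion preceding Theorem \ref{thm:3.4}, that the stationarity estimate (\ref{eq:3.1}) holds at every outer iteration $k\geq 1$ because of (\ref{eq:2.8}), that $\theta^{(k)}\leq\epsilon/2$ forces (\ref{eq:mais3.2}) and (\ref{eq:mais3.3}), and that (\ref{eq:stela3.3}) holds by construction; hence in each case it is enough to bound the work accumulated up to the first outer index $K$ with $\theta^{(K)}\leq\epsilon/2$. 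The per-iteration cost is supplied directly by Lemma \ref{lem:extra3.2}: solving the $k$-th subproblem (\ref{eq:2.6}) to the accuracy (\ref{eq:2.8}) costs at most $\mathcal{O}(\max\{1,\sigma^{(k)}\}(k+1)\sqrt{n}\epsilon^{-2})$ inner iterations of $\mathcal{M}_1$, so the total is $\sum_{k=0}^{K-1}\mathcal{O}(\max\{1,\sigma^{(k)}\}(k+1)\sqrt{n}\epsilon^{-2})$. The two parts of the theorem then differ only in the bound available for $K$ and for the growth of $\sigma^{(k)}$.

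For part (a), Theorem \ref{thm:3.4} gives $K=\mathcal{O}(|\log(\epsilon)|)$ (the constant term $\sigma_{max}^{1/\alpha}+2$ in (\ref{eq:3.11}) is $\epsilon$-independent), and boundedness of the penalty parameters gives $\max\{1,\sigma^{(k)}\}\leq\max\{1,\sigma_{max}\}$. I would then pull these factors out of the sum and use $\sum_{k=0}^{K-1}(k+1)=\mathcal{O}(K^2)$, obtaining a total of $\mathcal{O}(\sigma_{max}\sqrt{n}\epsilon^{-2}K^2)$; substituting $K=\mathcal{O}(|\log(\epsilon)|)$ yields $\mathcal{O}(\sigma_{max}\sqrt{n}|\log(\epsilon)|^2\epsilon^{-2})$.

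For part (b), Theorem \ref{thm:3.5} gives $K=\mathcal{O}(\epsilon^{-2/(\alpha-1)})$ (the logarithmic term in (\ref{eq:3.16}) being dominated by the negative power of $\epsilon$). The extra ingredient I would establish is an a priori bound on the penalty parameter: the update (\ref{eq:2.9}) satisfies $\sigma^{(k+1)}\leq\max\{(k+1)^\alpha,\sigma^{(k)}\}$ at every step, so a one-line induction gives $\sigma^{(k)}\leq\max\{k^\alpha,\sigma^{(0)}\}=\mathcal{O}(k^\alpha)$, whence $\max\{1,\sigma^{(k)}\}=\mathcal{O}(k^\alpha)$. Plugging this in and summing, $\sum_{k=0}^{K-1}k^\alpha(k+1)=\mathcal{O}(\sum_{k}k^{\alpha+1})=\mathcal{O}(K^{\alpha+2})$, the integrality of $\alpha$ keeping the power sum elementary, so the total is $\mathcal{O}(\sqrt{n}\epsilon^{-2}K^{\alpha+2})$. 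Substituting $K=\mathcal{O}(\epsilon^{-2/(\alpha-1)})$ then gives $\mathcal{O}\left(\sqrt{n}\,\epsilon^{-2}\epsilon^{-2(\alpha+2)/(\alpha-1)}\right)=\mathcal{O}\left(\sqrt{n}\,\epsilon^{-\left(\frac{2(2+\alpha)}{\alpha-1}+2\right)}\right)$.

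Everything except one step is bookkeeping: the two geometric-type summations and the tracking of exponents. The hard part, such as it is, will be the penalty-growth estimate $\sigma^{(k)}=\mathcal{O}(k^\alpha)$ in part (b), since it is precisely this factor that could otherwise blow up the accumulated cost, and it is where the hypothesis that $\alpha\geq 2$ is integer enters to keep the sum of powers $\sum_{k}k^{\alpha+1}$ in closed form.
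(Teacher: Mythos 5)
Your proposal is correct and follows essentially the same route as the paper's proof: both sum the per-subproblem cost from Lemma \ref{lem:extra3.2} over the outer iterations, bound the penalty parameter by $\sigma_{max}$ in case (a) and by $\mathcal{O}(k^{\alpha})$ via the update rule (\ref{eq:2.9}) in case (b), and then substitute the outer-iteration counts from Theorems \ref{thm:3.4} and \ref{thm:3.5}. The only cosmetic difference is that the paper normalizes $\sigma^{(0)}=1$ to write $\sigma^{(k)}\leq k^{\alpha}$ directly, whereas you absorb $\sigma^{(0)}$ into the constant of the induction.
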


\begin{proof}
By A1 and Theorems \ref{thm:3.4} and \ref{thm:3.5}, there exists an iteration number $T$ such that (\ref{eq:3.1})-(\ref{eq:mais3.3}) hold for $k=T+1$. Without loss of generality, assume that $\sigma^{(0)}=1$. Then, $\sigma^{(k)}\geq 1$ and, by Lemma \ref{lem:extra3.2}, the total number of inner iterations of $\mathcal{M}_{1}$ performed until iteration $T+1$ of Algorithm 1 is proportional to 
\begin{equation*}
\left(\sum_{k=1}^{T+2}\sigma^{(k)}k\right)\sqrt{n}\epsilon^{-2}.
\end{equation*}
If $\left\{\sigma^{(k)}\right\}_{k\geq 0}$ is bounded from above by $\sigma_{max}$, we have
\begin{equation*}
\left(\sum_{k=1}^{T+2}\sigma^{(k)}k\right)\sqrt{n}\epsilon^{-2}\leq\left(\sum_{k=1}^{T+2}k\right)\sigma_{max}\sqrt{n}\epsilon^{-2}=\mathcal{O}\left((T+2)^{2}\sigma_{max}\sqrt{n}\epsilon^{-2}\right).
\end{equation*}
Thus, by Theorem \ref{thm:3.4}, we get a bound of $\mathcal{O}\left(|\log(\epsilon)|^{2}\sigma_{max}\sqrt{n}\epsilon^{-2}\right)$ for the total number of inner iterations of $\mathcal{M}_{1}$, proving statement (a).
\\
Suppose that $\lim_{k\to +\infty}\sigma^{(k)}=+\infty$. By (\ref{eq:2.9}) and $\sigma^{(0)}=1$ we have $\sigma^{(k)}\leq k^{\alpha}$ for all $k$. Since $\alpha$ is a positive integer, we have
\begin{equation*}
\left(\sum_{k=1}^{T+2}\sigma^{(k)}k\right)\sqrt{n}\epsilon^{-2}\leq\left(\sum_{k=1}^{T+2}k^{1+\alpha}\right)\sqrt{n}\epsilon^{-2}=\mathcal{O}\left((T+2)^{2+\alpha}\sqrt{n}\epsilon^{-2}\right),
\end{equation*}
where the last equality is due to the fact that $\sum_{k=1}^{T+2}k^{1+\alpha}=\mathcal{O}((T+2)^{2+\alpha})$ (see, e.g., \cite{SCHULTZ}). Thus, by Theorem \ref{thm:3.5}, we get a bound of $\mathcal{O}\left(\sqrt{n}\epsilon^{-\left(\frac{2(2+\alpha)}{\alpha-1}+2\right)}\right)$ for the total number of inner iterations of $\mathcal{M}_{1}$, proving statement (b).
\end{proof}

Now, let us consider linearly constrained problems of the form
\begin{align}
\min_{x\in\mathbb{R}^{n}}&\quad f(x),\label{eq:4.1}\\
\text{s.t.}              &\quad a_{i}^{T}x-b_{i}=0,\,\,i=1,\ldots,m, \label{eq:4.2}
\end{align}
where $f:\mathbb{R}^{n}\to\mathbb{R}$ is $p$-times differentiable ($p\geq 2$), possibly nonconvex. This is a particular case of (\ref{eq:1.1})-(\ref{eq:1.3}) with $m=m_{e}$. The corresponding augmented Lagrangian function to (\ref{eq:4.1})-(\ref{eq:4.2}) is

\begin{equation}
P(x,\lambda,\sigma)=f(x)+\sum_{i=1}^{m}\left[-\lambda_{i}(a_{i}^{T}x-b_{i})+\frac{1}{2}\sigma (a_{i}^{T}x-b_{i})^{2}\right].
\label{eq:4.3}
\end{equation}
We denote by $D^{p}f(x)$ the $p$-th order derivative of $f$ at point $x$, which is a tensor defined by
\begin{equation*}
\left[D^{p}f(x)\right]_{i_{1}\ldots i_{p}}=\dfrac{\partial^{p}f}{\partial x_{i_{1}}\ldots\partial x_{i_{p}}}(x),\,\,1\leq i_{1},\ldots,i_{p}\leq n.
\end{equation*}
In particular $D^{2}f(x)=\nabla^{2}f(x)$. The next lemma relates the $p$-th order derivatives of $P(\,.\,,\lambda,\sigma)$ and $f(\,.\,)$.
\begin{lemma}
\label{lem:4.1}
Given $f:\mathbb{R}^{n}\to\mathbb{R}$, $\lambda\in\mathbb{R}^{m}$ and $\sigma>0$, let $P(x,\lambda,\sigma)$ be defined by (\ref{eq:4.3}). If $f$ is $p$-times differentiable with $p\geq 2$, then
\begin{equation}
D^{p}P(x,\lambda,\sigma)=\left\{\begin{array}{ll} \nabla^{2}f(x)+\sigma A^{T}A,&\text{if}\,\,p=2,\\
                                                  D^{p}f(x),&\text{if}\,\,p\geq 3,
                               \end{array}
                         \right.
\label{eq:4.4}
\end{equation}
where the $i$-th row of $A\in\mathbb{R}^{m\times n}$ is vector $a_{i}^{T}$.
\end{lemma}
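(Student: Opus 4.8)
The plan is to exploit the fact that $P(\,.\,,\lambda,\sigma)$ differs from $f$ only by a polynomial of degree at most two in $x$. First I would write $P(x,\lambda,\sigma)=f(x)+q(x)$, where
$$q(x)=\sum_{i=1}^{m}\left[-\lambda_{i}(a_{i}^{T}x-b_{i})+\frac{1}{2}\sigma(a_{i}^{T}x-b_{i})^{2}\right].$$
Since $\lambda$, $\sigma$, $a_{i}$ and $b_{i}$ are all independent of $x$, the map $q$ is a quadratic function of $x$, and by linearity of higher-order differentiation we have $D^{p}P(x,\lambda,\sigma)=D^{p}f(x)+D^{p}q(x)$ for every $p\geq 1$. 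The whole lemma then reduces to computing the derivatives of the single quadratic $q$.

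Next I would differentiate $q$ directly. Its gradient is $\nabla q(x)=\sum_{i=1}^{m}\left[-\lambda_{i}a_{i}+\sigma(a_{i}^{T}x-b_{i})a_{i}\right]$, and differentiating once more the constant and linear parts drop out, leaving the $x$-independent Hessian
$$\nabla^{2}q(x)=\sigma\sum_{i=1}^{m}a_{i}a_{i}^{T}.$$
The one identity worth verifying here is $\sum_{i=1}^{m}a_{i}a_{i}^{T}=A^{T}A$, which follows entrywise from $(A^{T}A)_{jk}=\sum_{i=1}^{m}a_{ij}a_{ik}=\sum_{i=1}^{m}(a_{i}a_{i}^{T})_{jk}$, using the stated convention that the $i$-th row of $A$ is $a_{i}^{T}$. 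This settles the case $p=2$, giving $D^{2}P(x,\lambda,\sigma)=\nabla^{2}f(x)+\sigma A^{T}A$.

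Finally, for $p\geq 3$ I would observe that $\nabla^{2}q$ is constant in $x$, so every derivative of $q$ of order at least three vanishes, i.e.\ $D^{p}q(x)=0$; hence $D^{p}P(x,\lambda,\sigma)=D^{p}f(x)$. There is no genuine obstacle in this argument: it rests entirely on the elementary observation that a quadratic has a constant Hessian and no higher derivatives, combined with the linearity of $D^{p}$. The only point requiring a moment's care is correctly matching the second derivative of the penalty term to the Gram structure $A^{T}A$ under the given row convention on $A$, which is the bookkeeping step I singled out above.
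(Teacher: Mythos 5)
Your proposal is correct and follows essentially the same route as the paper: a direct computation showing that the penalty term is quadratic in $x$ with Hessian $\sigma\sum_{i=1}^{m}a_{i}a_{i}^{T}=\sigma A^{T}A$ and vanishing derivatives of order three and higher. The paper simply carries out the same entrywise differentiation on $P$ itself rather than isolating the quadratic $q$ first.
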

\begin{proof}
Indeed, from (\ref{eq:4.3}), a direct calculation shows that
\begin{equation*}
\dfrac{\partial P}{\partial x_{j}}(x,\lambda,\sigma)=\dfrac{\partial f}{\partial x_{j}}(x)+\sum_{i=1}^{m}\left[-\lambda_{i}a_{ij}+\sigma (a_{i}^{T}x-b_{i})a_{ij}\right].
\end{equation*}
Then,
\begin{equation*}
\dfrac{\partial^{2} P}{\partial x_{k}\partial x_{j}}(x,\lambda,\sigma)=\dfrac{\partial^{2} f}{\partial x_{k}\partial x_{j}}(x)+\sum_{i=1}^{m}\sigma a_{ik}a_{ij},
\end{equation*}
which gives
\begin{equation*}
\dfrac{\partial^{3} P}{\partial x_{\ell}\partial x_{k}\partial x_{j}}(x,\lambda,\sigma)=\dfrac{\partial^{3} f}{\partial x_{\ell}\partial x_{k}\partial x_{j}}(x).
\end{equation*}
Therefore, (\ref{eq:4.4}) holds.
\end{proof}

For $p\geq 2$, it follows from Lemma \ref{lem:4.1} that $D^{p}P(\,.\,,\lambda,\sigma)$ is $L_{p}$-Lipschitz continuous when $D^{p}f(\,.\,)$ is $L_{p}$-Lipschitz continuous. In this case, one can minimize $P(\,.\,,\lambda,\sigma)$ by using a $p$-order method $\mathcal{M}_{p}$. More specifically, let us consider the following assumptions on $f$ and $\mathcal{M}_{p}$:
\begin{itemize}
\item[\textbf{A5.}] $D^{p}f(\,.\,)$ is $L_{p}$-Lipschitz continuous, i.e.,  
\begin{equation*}
\|D^{p}f(x)-D^{p}f(y)\|\leq L_{p}\|x-y\|,\quad\forall x,y\in\mathbb{R}^{n}.
\end{equation*}
\item[\textbf{A6.}] Given any $p$-times differentiable function $g:\mathbb{R}^{n}\to\mathbb{R}$, bounded below by $g_{low}$, with $L$-Lipschitz continous $p$-th order derivative ($p\geq 2$), method $\mathcal{M}_{p}$ with starting point $\tilde{x}$, can find an $\epsilon$-approximate stationary point of $g(\,.\,)$ in at most
\begin{equation*}
C_{\mathcal{M}_{p}}L^{\frac{1}{p}}\left(g(\tilde{x})-g_{low}\right)\epsilon^{-\frac{p+1}{p}}
\end{equation*}
iterations, where $C_{\mathcal{M}_{p}}$ is a positive constant that depends only on the method $\mathcal{M}_{p}$. 
\end{itemize}

An important class of unconstrained methods that satisfy A6 for $p=2$ is the class of methods based on the cubic regularization of Newton's method (see, e.g., \cite{NP,CGT,Birgin,GN}). Moreover, for $p\geq 3$, several tensor methods satisfying A6 have been proposed recently (see, e.g., \cite{Birgin,CGT2,GN2}).The use of these tensor methods to approximately solve (\ref{eq:2.6}) give us complexity bounds better than the ones obtained in Theorem \ref{thm:extra3.1}, as we can see below.
\begin{theorem}
\label{thm:4.3}
Suppose that Algorithm 1 is applied to solve (\ref{eq:4.1})-(\ref{eq:4.2}) with $f$ satisfying A1 and A5. Moreover, assume that at each outer iteration of Algorithm 1, a monotone method $\mathcal{M}_{p}$ satisfying A6 is used to approximately solve (\ref{eq:2.6}) with starting point $\tilde{x}_{k,0}$ given in (\ref{eq:extra4.13}). Then, the following statements are true:
\begin{itemize}
\item[(a)] If $\left\{\sigma^{(k)}\right\}_{k\geq 0}$ is bounded from above by $\sigma_{max}$, then Algorithm 1 takes at most $\mathcal{O}\left(|\log(\epsilon)|^{2}\epsilon^{-\frac{p+1}{p}}\right)$ inner iterations of $\mathcal{M}_{p}$ to generate an $\epsilon$-approximate KKT point of (\ref{eq:4.1})-(\ref{eq:4.2}).
\item[(b)] If $\lim_{k\to +\infty}\sigma^{(k)}=+\infty$, then Algorithm 1 takes at most $\mathcal{O}\left(\epsilon^{-\left(\frac{4}{\alpha-1}+\frac{p+1}{p}\right)}\right)$ inner iterations of $\mathcal{M}_{p}$ to generate an $\epsilon$-approximate KKT point of (\ref{eq:4.1})-(\ref{eq:4.2}).
\end{itemize}
\end{theorem}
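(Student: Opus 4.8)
The plan is to mirror the proof of Theorem \ref{thm:extra3.1}, but with the first-order inner solver replaced by the $p$-order method $\mathcal{M}_p$ and with the complexity estimate A6 in place of A4. The first step is to establish a $p$-order analogue of Lemma \ref{lem:extra3.2}: an upper bound on the number of inner iterations of $\mathcal{M}_p$ needed to produce an iterate satisfying (\ref{eq:2.8}) at a generic outer iteration $k$.

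The heart of the matter—and the reason the resulting bounds improve on those of Theorem \ref{thm:extra3.1}—is the structural fact supplied by Lemma \ref{lem:4.1}: the $p$-th derivative $D^{p}P(\,.\,,\lambda^{(k)},\sigma^{(k)})$ is Lipschitz continuous with constant exactly $L_{p}$, independent of $\sigma^{(k)}$. Indeed, for $p=2$ the Hessian $\nabla^{2}P=\nabla^{2}f+\sigma A^{T}A$ differs from $\nabla^{2}f$ only by the constant matrix $\sigma A^{T}A$, so its Lipschitz constant is $L_{2}$; for $p\geq 3$ we have $D^{p}P=D^{p}f$ outright. This is in sharp contrast with the first-order setting, where the Lipschitz constant of $\nabla P$ carried the factor $\max\{1,\sigma^{(k)}\}$. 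Eliminating that factor is precisely what sharpens the exponents below.

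The second step bounds the objective gap at the starting point exactly as in Lemma \ref{lem:extra3.2}. The lower bound $P_{low}=f_{low}-\frac{1}{2}\|\mu^{(0)}\|_{2}^{2}-(f(x_{0})-f_{low})k$ follows from Lemma \ref{lem:plus2.2}, A1 and (\ref{eq:3.9}), while the starting value is controlled by (\ref{eq:extra4.13}) and Lemma \ref{lem:3.1}, giving $P(\tilde{x}_{k,0},\lambda^{(k)},\sigma^{(k)})\leq f(x_{0})$; hence the gap $P(\tilde{x}_{k,0},\lambda^{(k)},\sigma^{(k)})-P_{low}$ is $\mathcal{O}(k)$. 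Feeding $L=L_{p}$ and this $\mathcal{O}(k)$ gap into A6 shows that subproblem $k$ costs at most $\mathcal{O}\!\left(k\,\epsilon^{-(p+1)/p}\right)$ inner iterations, with no $\sigma^{(k)}$ factor.

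The final step aggregates over outer iterations. By Theorems \ref{thm:3.4} and \ref{thm:3.5} there is an outer iteration count $T$ after which (\ref{eq:3.1})--(\ref{eq:mais3.3}) hold, and summing the per-subproblem cost over $k=1,\ldots,T+2$ yields $\mathcal{O}\!\left(\left(\sum_{k=1}^{T+2}k\right)\epsilon^{-(p+1)/p}\right)=\mathcal{O}\!\left((T+2)^{2}\epsilon^{-(p+1)/p}\right)$ total inner iterations. For statement (a), Theorem \ref{thm:3.4} gives $T=\mathcal{O}(|\log(\epsilon)|)$, producing $\mathcal{O}\!\left(|\log(\epsilon)|^{2}\epsilon^{-(p+1)/p}\right)$; for statement (b), Theorem \ref{thm:3.5} gives $T=\mathcal{O}\!\left(\epsilon^{-2/(\alpha-1)}\right)$, so $(T+2)^{2}=\mathcal{O}\!\left(\epsilon^{-4/(\alpha-1)}\right)$ and the total becomes $\mathcal{O}\!\left(\epsilon^{-(4/(\alpha-1)+(p+1)/p)}\right)$. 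The only genuine difficulty is recognizing and exploiting the $\sigma$-independence of the Lipschitz constant in the first step; once that is secured, the remainder is a direct transcription of the proof of Theorem \ref{thm:extra3.1}, with the summand $k$ (rather than $\sigma^{(k)}k$) delivering the improved exponents.
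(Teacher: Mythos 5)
Your proposal is correct and follows essentially the same route as the paper: it invokes Lemma \ref{lem:4.1} to get a $\sigma$-independent Lipschitz constant $L_{p}$ for $D^{p}P(\,.\,,\lambda^{(k)},\sigma^{(k)})$, reuses the $\mathcal{O}(k)$ bound on the initial gap from Lemma \ref{lem:extra3.2}, applies A6 to get $\mathcal{O}\bigl(k\,\epsilon^{-(p+1)/p}\bigr)$ inner iterations per subproblem, and sums over $k=1,\ldots,T+2$ before substituting the outer-iteration bounds of Theorems \ref{thm:3.4} and \ref{thm:3.5}. The only difference is that you spell out details the paper compresses into ``as in Lemma \ref{lem:extra3.2}''; no gap.
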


\begin{proof}
As in Lemma \ref{lem:extra3.2}, by A1, A5 and A6 we conclude that, at the $k$-th iteration, method $\mathcal{M}_{p}$ takes at most $\mathcal{O}\left(k\epsilon^{-\frac{p+1}{p}}\right)$ iterations to generate $\tilde{x}_{k,\ell}$ such that
\begin{equation*}
\|\nabla_{x}P(\tilde{x}_{k,\ell},\lambda^{(k)},\sigma^{(k)})\|_{2}\leq\epsilon.
\end{equation*}
Thus, the total number of inner iterations of $\mathcal{M}_{p}$ performed until iteration $T+1$ is proportional to 
\begin{equation*}
\left(\sum_{k=1}^{T+2}k\right)\epsilon^{-\frac{p+1}{p}}=\mathcal{O}\left((T+2)^{2}\epsilon^{-\frac{p+1}{p}}\right).
\end{equation*}
Consequently, (a) and (b) follow directly from the upper bounds on $T$ given in (\ref{eq:3.11}) and (\ref{eq:3.16}).
\end{proof}

\begin{remark}
It is worth mentioning that a complexity bound of $\mathcal{O}\left(|\log(\epsilon)|^{2}\epsilon^{-\frac{p+1}{p}}\right)$ can be obtained even when the penalty parameters are unbounded. For that, it is enough to replace (\ref{eq:2.9}) by the following update rule:
\begin{equation}
\sigma^{(k+1)}=\left\{\begin{array}{ll} \sigma^{(k),}&\text{if}\quad\theta^{(k+1)}\leq\gamma\theta^{(k)},\\
\max\left\{4^{k+1},\sigma^{(k)}\right\},&\text{otherwise.}
\end{array}
\right.
\label{eq:5.4}
\end{equation}
Indeed, using (\ref{eq:5.4}) in the proof of Theorem \ref{thm:3.5}, we can obtain $\tilde{k}\leq\mathcal{O}\left(|\log(\epsilon)|\right)$ which gives an outer ite\-ra\-tion complexity bound of $\mathcal{O}\left(|\log(\epsilon)|\right)$. This corresponds to an evaluation complexity bound of $\mathcal{O}\left(|\log(\epsilon)|^{2}\epsilon^{-\frac{p+1}{p}}\right)$ for problem (\ref{eq:4.1})-(\ref{eq:4.2}) when the penalty parameters are unbounded. However, (\ref{eq:5.4}) increases the penalty parameters much more aggressively than (\ref{eq:2.9}), and can lead to a premature ill-conditioning of (\ref{eq:2.6}). Therefore, despite the improved worst-case complexity, it is unlikely that (\ref{eq:5.4}) will give an efficient algorithm in practice.
\end{remark}

\begin{remark}
Consider now the following problem
\begin{align}
\min_{x\in\mathbb{R}^{n}}&\quad f(x),\label{eq:5.5}\\
\text{s.t.}              &\quad x^{T}R_{i}x+s_{i}^{T}x+t_{i} = 0,\,\,i=1,\ldots,m\label{eq:5.6}
\end{align}
where $f:\mathbb{R}^{n}\to\mathbb{R}$ is four times differentiable, possibly nonconvex, $R_{i}\in\mathbb{R}^{n\times n}$, $s_{i}\in\mathbb{R}^{n}$ and $t_{i}\in\mathbb{R}$, for $i=1,\ldots,m$. Let $P(x,\lambda,\sigma)$ denote the augmented Lagrangian function corresponding to (\ref{eq:5.5})-(\ref{eq:5.6}). Then, as in Lemma \ref{lem:4.1}, one can see that $D^{5}P(x,\lambda,\sigma)=D^{5}f(x)$ for all $x\in\mathbb{R}^{n}$. Thus, if $D^{5}f(\,.\,)$ is bounded, it follows that $D^{4}P(\,.\,,\lambda,\sigma)$ is Lipschitz continuous, and we can apply Algorithm 1 to (\ref{eq:5.5})-(\ref{eq:5.6}), solving (\ref{eq:2.6}) with a fourth-order tensor method $\mathcal{M}_{4}$. Specifically, if $\mathcal{M}_{4}$ satisfies A6 for $p=4$, then (as in Theorem \ref{thm:4.3}) we conclude that Algorithm 1 applied to (\ref{eq:5.5})-(\ref{eq:5.6}) takes at most $\mathcal{O}\left(|\log(\epsilon)|^{2}\epsilon^{-\frac{5}{4}}\right)$ inner iterations of $\mathcal{M}_{4}$ if $\left\{\sigma^{(k)}\right\}$ is bounded, and $\mathcal{O}\left(\epsilon^{-\left(\frac{4}{\alpha-1}+\frac{5}{4}\right)}\right)$ inner iterations of $\mathcal{M}_{4}$ if $\left\{\sigma^{(k)}\right\}$ is unbounded. 
\end{remark}

\section{Conclusion}

In this paper, we have studied the worst-case complexity of an inexact Augmented Lagrangian method for constrained nonconvex optimization problems. For the case in which the penalty parameters are bounded, we established a complexity bound of $\mathcal{O}(|\log(\epsilon)|)$ outer iterations for the referred algorithm to generate an $\epsilon$-approximate KKT point, for $\epsilon\in (0,1)$. For the case in which the penalty parameters are unbounded, we proved an iteration complexity bound of $\mathcal{O}\left(\epsilon^{-2/(\alpha-1)}\right)$, where $\alpha>1$ controls the rate of increase of the penalty parameters. In the particular class of linearly constrained problems, these bounds yield to evaluation complexity bounds of $\mathcal{O}(\sigma_{max}\sqrt{n}|\log(\epsilon)|^{2}\epsilon^{-2})$ and $\mathcal{O}\left(\sqrt{n}\epsilon^{-\left(\frac{2(2+\alpha)}{\alpha-1}+2\right)}\right)$, respectively, when appropriate first-order methods ($p=1$) are used to approximately solve the unconstrained subproblems at each iteration. For problems having only linear equality constraints, these bounds are improved to
$\mathcal{O}(|\log(\epsilon)|^{2}\epsilon^{-(p+1)/p})$ and $\mathcal{O}\left(\epsilon^{-\left(\frac{4}{\alpha-1}+\frac{p+1}{p}\right)}\right)$, respectively, when appropriate $p$-order me\-thods ($p\geq 2$) are used as inner solvers.

A key point in the Augmented Lagrangian method considered in this work is that it requires the feasibility of the starting point $x_{0}$, which may be difficult to compute for general nonconvex constraints. Up to now, it is not clear how this restriction can be avoided. Another natural question is whether our analysis can be adapted in order to cover possibly less aggressive update rules for the penalty parameters, such as
\begin{equation*}
\sigma^{(k+1)}=\left\{\begin{array}{ll} \sigma^{(k),}&\text{if}\quad\theta^{(k+1)}\leq\gamma\theta^{(k)},\\
\alpha \sigma^{(k)},&\text{otherwise.}
\end{array}
\right.
\end{equation*}
The authors are planning to address these and other interesting questions in their future research.

\section*{Acknowledgments}
We are very grateful to Coralia Cartis and the two anonymous referees, whose comments helped to improve significantly the paper. We are also grateful to Mario Mart\'inez and Stephen Wright for their insightful comments on the first version of this work.

\section*{Funding}

G.N. Grapiglia was partially supported by the National Council for Scientific and Technological Development - Brazil (grant 406269/2016-5). Y. Yuan was partially supported by NSFC (grant 11688101).


\begin{thebibliography}{10}

\bibitem{AND1} Andreani, R., Birgin, E.G., Mart\'inez, J.M., Schuverdt, M.L.: On Augmented Lagrangian Me\-thods with Lower-Level Constraints. SIAM Journal on Optimization \textbf{18}, 1286--1309 (2008)

\bibitem{AND2} Andreani, R., Haeser, G., Mart\'inez, J.M.: On Sequential Optimality Conditions for Smooth Constrained Optimization. Optimization \textbf{60}, 627--641 (2011)

\bibitem{BER} Bertsekas, D.P.: Constrained Optimization and Lagrange Multiplier Methods. Athena Scientific, Belmont (1996)

\bibitem{BM} Birgin, E.G., Mart\'inez, J.M.: Practical Augmented Lagrangian Methods for Constrained Optimization. SIAM, Philadelphia (2014)

\bibitem{Birgin2} Birgin, E.G., Gardenghi, J.L., Mart\'inez, J.M., Santos, S.A., Toint, Ph.L.: Evaluation complexity for nonlinear constrained optimization using unscaled KKT conditions and high-order models. SIAM Journal on Optimization \textbf{26}, 951--967 (2016)

\bibitem{Birgin} Birgin, E.G., Gardenghi, J.L., Mart\'inez, J.M., Santos, S.A., Toint, Ph.L.: Worst-case evaluation complexity for unconstrained nonlinear optimization using high-order regularized models. Mathematical Programming \textbf{163}, 359-368 (2017).

\bibitem{Birgin3} Birgin, E.G., Mart\'inez, J.M.: On regularization and active-set methods with complexity for constrained optimization. SIAM Journal on Optimization \textbf{28}, 1367-1395 (2018)

\bibitem{Birgin4} Birgin, E.G., Gardenghi, J.L., Mart\'inez, J.M., Santos, S.A.: On the use of third-order models with fourth-order regularization for unconstrained optimization. To appear in Optimization Letters.

\bibitem{Birgin5} Birgin, E.G., Mart\'inez, J.M.: Complexity and performance of an Augmented Lagrangian Algorithm. Optimization Methods and Software, (DOI) 10.1080/10556788.2020.1746962 (2020)

\bibitem{BUE} Bueno, L.F., Mart\'inez, J.M.: On the complexity of an Inexact Restoration method for constrained optimization. Optimization Online (2018)

\bibitem{CGT} Cartis, C., Gould, N. I. M., Toint, Ph. L.: Adaptive cubic regularisation methods for unconstrained optimization. Part II: worst-case function - and derivative - evaluation complexity. Math. Program. \textbf{130}, 295-319 (2011)

\bibitem{CGT01} Cartis, C., Gould, N. I. M., Toint, Ph. L.: On the evaluation complexity of composite function minimization with applications to nonconvex nonlinear programming. SIAM Journal on Optimization \textbf{21}, 1721--1739 (2011)

\bibitem{CGT0} Cartis, C., Gould, N. I. M., Toint, Ph. L.: On the complexity of finding first-order critical points in constrained nonlinear optimization. Mathematical Programming \textbf{144}, 93--106 (2014)

\bibitem{CGTmais2} Cartis, C., Gould, N. I. M., Toint, Ph. L.: On the evaluation complexity of constrained nonlinear least-squares and general constrained nonlinear optimization using second-order methods. SIAM Journal on Numerical Analysis \textbf{53}, 836--851 (2015)

\bibitem{CGTmais1} Cartis, C., Gould, N. I. M., Toint, Ph. L.: Corrigendum: On the complexity of finding first-order critical points in constrained nonlinear optimization. Mathematical Programming \textbf{161}, 611--626 (2017)

\bibitem{CGT2} Cartis, C., Gould, N. I. M., Toint, Ph. L.: Universal Regularization Methods: Varying the power, the smoothness and the accuracy. SIAM Journal on Optimization \textbf{29}, 595--615 (2019)

\bibitem{CGT3} Cartis, C., Gould, N. I. M., Toint, Ph. L.: Evaluation Complexity Bounds for Smooth Constrained Nonlinear Optimization Using Scaled KKT Conditions and High-Order Methods. Approximation and Optimization in Springer Optimization and Its Application, vol. 145, C. Demetriou and P.M. Pardalos, Eds., 1st ed. Springer, pp. 5--26 (2019)

\bibitem{Curtis}
\textsc{Curtis, F.E., Robinson, D.P., Samadi, M.} (2018)
Curtis, F.E., Robinson, D.P., Samadi, M.: Complexity Analysis of a Trust Funnel Algorithm for Equality Constrained Optimization. SIAM Journal on Optimization \textbf{28}, 1533--1563 (2018).

\bibitem{GN} Grapiglia, G.N., Nesterov, Yu.: Regularized Newton Methods for minimizing functions with H\"{o}lder continuous Hessians. SIAM Journal on Optimization \textbf{27}, 478-506 (2017)

\bibitem{GN2} Grapiglia, G.N., Nesterov, Yu.: Tensor methods for minimizing functions with H\"{o}lder continuous higher-order derivatives. ArXiv: 1904.12559 [math.OC] (2019)

\bibitem{GYY} Grapiglia, G.N., Yuan, J., Yuan, Y.: On the convergence and worst-case complexity of trust-region and regularization methods for unconstrained optimization. Mathematical Programming \textbf{152}, 491--520 (2015)

\bibitem{GST} Gratton, S., Sartenaer, A., Toint, Ph.L.: Recursive trust-region methods for multiscale nonlinear optimization. SIAM Journal on Optimization \textbf{19}, 414--444 (2008)

\bibitem{HAESER} Haeser, G., Liu, H., Ye, Y.: Optimality condition and complexity analysis for linearly constrained optimization without differentiability on the boundary. To appear in Mathematical Programming.

\bibitem{HES} Hestenes, M.R.: Multiplier and gradient methods. Journal of Optimization Theory and Applications \textbf{4}, 303--320 (1969)

\bibitem{HONG} Hong, M.: Decomposing linearly constrained problems by a proximal primal dual approach: Algorithms, convergence, and applications. arXiv: 1604.00543 [math.OC] (2016)

\bibitem{Martinez} Mart\'inez, J.M.: On high-order model regularization for constrained optimization. SIAM Journal on Optimization \textbf{27}, 2447--2458 (2017)

\bibitem{NES} Nesterov, Yu.: Introductory Lectures on Convex Optimization: A Basic Course. Kluwer Academic Publishers, Dordrecht, 2016

\bibitem{NP} Nesterov, Yu, Polyak, B.T.: Cubic regularization of Newton method and its global performance. Mathematical Programming \textbf{108}, 177-205 (2006)

\bibitem{POW} Powell, M.J.D.: A method for nonlinear constraints in minimization problems. In \textit{Optimization}, R. Fletcher (ed.), Academic Press, New York, NY, pp. 283--298 (1969)

\bibitem{ROY} Royer, C.W., O'Neill, M., Wright, S.J.: A Newton-CG algorithm with complexity guarantees for smooth unconstrained optimization. Mathematical Programming, (DOI) 10.1007/s10107-019-01362-7 (2019)

\bibitem{SCHULTZ} Schultz, H.J.: The sums of the $k$th powers of the first $n$ integers. Amer. Math. Monthly \textbf{87}, 478--481 (1980)

\bibitem{SUN} Sun, W., Yuan, Y.: Optimization Theory and Methods: Nonlinear Programming. Springer, Berlin (2006)

\bibitem{XIE} Xie, Y., Wright, S.J.: Complexity of proximal augmented Lagrangian for nonconvex optimization with nonlinear equality constraints. arXiv: 1908.00131 [math.OC] (2019)

\end{thebibliography}
\end{document}